\newtheorem{thm}{Theorem}[section]
\newtheorem{prop}[thm]{Proposition}
\newtheorem{defn}[thm]{Definition}
\newtheorem{problem}[thm]{Example}
\numberwithin{equation}{section}
\begin{document}


\begin{center}\large{{\bf{Fuzzy soft numbers}}}

\vspace{0.5cm}

\footnotesize Manash Jyoti Borah$^{1}$ and Bipan Hazarika$^{2}$

\vspace{.2cm}
\footnotesize $^{1}$Department of Mathematics, Bahona College,  Jorhat-785 101, Assam, India\\
Email:mjyotibora9@gmail.com

\vspace{.2cm}

\footnotesize $^{2}$Department of Mathematics, Gauhati University, Guwahati-781014, Assam, India\\
Email:  bh\_gu@gauhati.ac.in;  bh\_rgu@yahoo.co.in\\



\end{center}
\title{}
\author{}
\thanks{{\today\\$^{\ast}$The corresponding author}}

\begin{abstract}

 In this paper, we introduce the notion of fuzzy soft numbers. Here defined fuzzy soft number and four arithmetic operations $ \tilde{+}, \tilde{-}, \tilde{\times}, \tilde{\div} $ and related properties. Also introduce Hausdorff distance, Fuzzy soft metric space, convergence sequence, Cauchy sequence, Continuity, and uniform continuity of fuzzy soft numbers. At starting of this paper, we study convex and concave fuzzy soft sets and
 some of their properties. \\

 Keywords: Fuzzy soft sets; Fuzzy soft metric space; Fuzzy soft number; Hausdorff distance.

AMS subject classification no: 03E72.
\end{abstract}

\maketitle
\pagestyle{myheadings}
%

\maketitle
\section{Introduction}
In 2003, Maji et al. \cite{majietal1,majietal2,maji} studied the theory of soft set initiated by Molodtsov    \cite{molodstov} and developed several basic notions of soft set theory. They also applied soft set theory in decision making (see \cite{cagman}) and solving problems in medical, economics, engineering, etc.

In the year 1965, Zadeh \cite{zadeh} introduced the concept of fuzzy set theory and its applications can be found in
many branches of mathematical and engineering sciences including management science, control
engineering, computer science, artificial intelligence. In 1970, firstly introduced concept of fuzzy number. The theory of fuzzy number has a number of applications such as fuzzy topology, fuzzy analysis, fuzzy logic and fuzzy decision making, algebraic structures,  etc. But fuzzy soft number are not developed several directions. In 2012, Das et al. \cite{das1,das2} introduced important part of soft set theory, which are notion of soft real sets, soft real numbers, soft complex numbers and some of their basic properties. Also Das et al. \cite{das3} introduced a notion of soft metric and some basic properties of soft metric space.

In this paper, we study notion of fuzzy soft number. In Section 2, some preliminary results are given. In Section 3, study Convex and concave fuzzy soft sets and some of their properties. After this, we study the notion of fuzzy soft numbers and some of basic properties. In Section 4, introduce Hausdorff distance, Fuzzy soft metric space, convergence sequence, Cauchy sequence, Continuity and uniform continuity of fuzzy soft numbers.

\section{Preliminary Results}

In this section we recall some basic concepts and definitions
regarding fuzzy soft sets, fuzzy soft topology and fuzzy soft mapping.

\begin{defn} \cite{maji} Let $U$ be an initial universe and $F$ be a set of parameters. Let $\tilde{P}(U)$ denote the power set of $U$ and $A$ be a non-empty subset of $F.$  Then $F_A$ is called a fuzzy soft set over U where $F:A\rightarrow \tilde{P}(U)$is a mapping from $A$ into $\tilde{P}(U).$
\end{defn}

\begin{defn}
\cite{molodstov}  $F_E$ is called a soft set over $U$ if and only if $F$ is a mapping of $E$ into the set of all
subsets of the set $U.$
\end{defn}
In other words, the soft set is a parameterized family of subsets
of the set $U.$ Every set $F(\epsilon),$ $\epsilon\tilde{\in} E,$
from this family may be considered as the set of
$\epsilon$-element of the soft set $F_E$  or as the set of
$\epsilon$-approximate elements of the soft set.

\begin{defn}
	\cite{kharal} Let $X$ be a universe and $E$ a set of attributes. Then the collection of all fuzzy soft sets over $U$ with attributes from $E$ is called a fuzzy soft class and is denoted by $\overline{(X,E)}.$
\end{defn}
\begin{defn}
	\cite{borah}	Let $ \overline{(U,E)} $ and $ \overline{(V,E')} $ be classes of hesitant fuzzy soft sets over $ U $ and $ V $ with attributes from $ E $ and $ E' $
	respectively. Let $ p:U\longrightarrow V $ and  $ q:E\longrightarrow E' $ be mappings. Then a hesitant fuzzy soft mappings $ f=(p,q):\overline{(U,E)}\longrightarrow\overline{(V,E')} $ is defined as follows; \\
	For a hesitant fuzzy soft set $F_A$ in $\overline{(U,E)},$ $ f(F_A) $ is a hesitant fuzzy soft set in $ \overline{(V,E')} $ obtained as follows: for $ \beta\tilde{\in}q(E)\tilde{\subseteq}E'$ and $ y\tilde{\in}V, $
	\[f(F_A)(\beta)(y)=\bigcup_{\alpha\tilde{\in}q^-1(\beta)\cap A,s\tilde{\in}p^-1(y)} (\alpha)\mu_s\]
	$ f(F_A)$  is called a hesitant fuzzy soft image of a hesitant fuzzy soft set  $F_A.$ Hence $ (F_A, f(F_A))\tilde{\in}f, $ where $ F_A\tilde{\subseteq}\overline{(U,E)}, f(F_A)\tilde{\subseteq}\overline{(V,E')}. $
\end{defn}
\begin{defn}
	\cite{borah}Let  $ f=(p,q):\overline{(U,E)}\longrightarrow\overline{(V,E')} $ be a hesitant fuzzy soft mapping and $ G_B, $ a hesitant fuzzy soft set in $\overline{(V,E')} $, where $ p:U\longrightarrow V ,  q:E\longrightarrow E' $ and $ B\tilde{\subseteq}E'. $ Then $ f^{-1}(G_B) $ is a hesitant fuzzy soft set in $\overline{(U,E)} $ defined as follows: for $ \alpha\tilde{\in}q^{-1}(B)\tilde{\subseteq}E $ and $ x\tilde{\in}U, $
	\[f^{-1}(G_B)(\alpha)(x)=(q(\alpha))\mu_{p(x)}  \]
	$f^{-1}(G_B)  $ is called a hesitant fuzzy soft inverse image of $ G_B. $
\end{defn}
\begin{defn} \cite{roy} A fuzzy soft topology $\tau$ on
	$(U,E)$ is a family of fuzzy soft sets over $(U,E)$ satisfying the
	following properties
	\begin{enumerate}
		\item[(i)] $\tilde{\phi},\tilde{E}\tilde{\in}\tau$
		\item[(ii)] if $F_{A},G_{B}\tilde{\in}\tau,$  then
		$F_{A}\tilde{\cap}G_{B}\tilde{\in}\tau.$
		\item[(iii)] if ${F_{A_{\alpha}}\tilde{\in}}\tau$ for all
		$\alpha\tilde{\in}\Delta$ an index set, then
		$\bigcup_{\alpha\in{\Delta}}F_{A_{\alpha}}\tilde{\in}\tau.$
	\end{enumerate}
\end{defn}
\begin{defn} \cite{roy} If $\tau$ is a fuzzy soft topology on $ (U,E), $ the triple $(U,E, \tau)$ is said to be a fuzzy soft topological space. Also each member of $ \tau $ is called a fuzzy soft open set in $(U,E, \tau).$
\end{defn}
\begin{defn} \cite{neog} Let $(U,E,\tau)$ be a fuzzy soft
	topological space. Let $F_{A}$ be a fuzzy soft set over $(U,E).$ The
	fuzzy soft closure of $F_A$ is defined as the intersection of all
	fuzzy soft closed sets which contained $F_A$ and is denoted by
	$\bar{F_A}$ or $ cl(F_A) $ we write
		\[cl(F_A)=\tilde{\bigcap}\{G_B: G_B\textrm {~is fuzzy soft closed
		and~} F_A\tilde{\subseteq}G_B\}.\]
\end{defn}
\begin{defn} \cite{aygunoglu}	Let $ \overline{(U,E)} $ and $ \overline{(V,E')} $ be classes of fuzzy soft sets over $ U $ and $ V $ with attributes from $ E $ and $ E' $
	respectively. Let $ p:U\longrightarrow V $ and  $ q:E\longrightarrow E' $ be two mappings. Then  $ f=(p,q):\overline{(U,E)}\longrightarrow\overline{(V,E')} $ is called a fuzzy soft mappings from $ \overline{(U,E)} $ to $ \overline{(V,E')}. $\\
	If $ p $ and $ q $ is injective then the fuzzy soft mapping $f=(p,q)  $ is said to be injective. \\
	If $ p $ and $ q $ is surjective then the fuzzy soft mapping $f=(p,q)  $ is said to be surjective. \\
	If $ p $ and $ q $ is constant then the fuzzy soft mapping $f=(p,q)  $ is said to be constant. \\
\end{defn}
\begin{defn} \cite{varol} Let $(U,E, \tau_1)$ and  $(U,E, \tau_2)$ be two fuzzy soft topological spaces.
	\begin{enumerate}
		\item[(i)] A fuzzy soft mapping $ f=(p,q):\overline{(U,E, \tau_1)}\longrightarrow\overline{(U,E, \tau_2)} $ is called fuzzy soft continuous if $ f^{-1} (G_B)\tilde{\in}\tau_1, \forall G_B\tilde{\in}\tau_2. $
				\item[(ii)]  A fuzzy soft mapping $ f=(p,q):\overline{(U,E, \tau_1)}\longrightarrow\overline{(U,E, \tau_2)} $ is called fuzzy soft open if $ f(F_A)\tilde{\in}\tau_2, \forall F_A\tilde{\in}\tau_1. $
	\end{enumerate}
\end{defn}
\begin{defn} \cite{akdag} Let $ (U, E, \tau_1) $ and $ (U, E, \tau_2) $ be two soft topological space. Then a soft multifunction $ f:\overline{(U,E, \tau_1)}\longrightarrow\overline{(U,E, \tau_2)}  $ is said to be ;
	\begin{enumerate}
		\item[(i)] soft upper semi continuous at a soft point $ e_i(F_A)\tilde{\in}(U,E) $if for every soft open set $ G_B \tilde{\in}(V,E)$ such that $f( e_i(F_A))\tilde{\subseteq}G_B, $ there exists a soft semi open neighborhood $ H_A $ of $ e_i(F_A) $ such that  $f( e_i(H_A))\tilde{\subseteq}G_B, \forall f( e_i(H_A)) \tilde{\in}H_A  $
				\item[(ii)] soft lower semi continuous at a soft point $ e_i(F_A)\tilde{\in}(U,E) $if for every soft open set $ G_B \tilde{\in}(V,E)$ such that $f( e_i(F_A))\tilde{\cap}G_B \tilde{\neq}\tilde{\phi}, $ there exists a soft semi open neighborhood $ H_A $ of $ e_i(F_A) $ such that  $f( e_i(H_A))\tilde{\cap}G_B \tilde{\neq}\tilde{\phi} , \forall f( e_i(H_A)) \tilde{\in}H_A  $
		\item[(iii)] soft upper(lower) semi continuous if $ f $ has this property at every soft point of $ (U, E) .$
		
	\end{enumerate}
\end{defn}

\section{Fuzzy soft numbers}

\begin{defn}
Let 	$F_A=\{ F(e_{i})=(h_{t}, \mu_{F(e_i)}(h_t));h_t\tilde{\in}U; t=1,2,...m;  i=1,2,...,n\} $ be a fuzzy soft set in $(U,E).$ Now convert object sets $ h_{t} $ are integers namely $ h_{t}=i, i=1,2,...,n.$ Then $ F_A $ is called convex fuzzy soft set if and only if membership function $ \mu_{F(e_i)} $ satisfies following conditions:
\begin{enumerate}
	\item[(i)] 	each parameter $ e_i $ of $ F_A,$ 	$\mu_{F(e_i)} (\lambda.h_{1}+(1-\lambda).h_{2})\geq \min\{\mu_{F(e_i)}(h_1),\mu_{F(e_i)}(h_2)\}\\  \textrm{where~}  \lambda\tilde{\in}[0,1].$ and $h_1, h_2\tilde{\in} \mathbb{R}.$
	\item[(ii)] $\mu_{\cap_iF(e_i)} (\lambda.h_{1}+(1-\lambda).h_{2})\geq \min\{\mu_{\cap_iF(e_i)}(h_1),\mu_{\cap_iF(e_i)}(h_2)\}  \textrm{~where~}  \lambda\tilde{\in}[0,1]. $ and $ h_1, h_2\tilde{\in} \mathbb{R}. $
\end{enumerate}
Otherwise it is non-convex fuzzy soft set.
\end{defn}
\begin{problem}
	Let \begin{align*}P_A=\{P(e_1)=\{(h_1, 0.1),(h_2, 0.6), (h_3, 1.0), (h_4, 0.8), (h_5, 0.2)\}\\	P(e_2)=\{(h_1, 0.3),(h_2, 0.9), (h_3, 1.0), (h_4, 0.7), (h_5, 0.2)\}\}.\end{align*}
	Therefore $ P_A $ be a convex fuzzy soft set.
	\end{problem}

\begin{thm}
If $ F_A $ and $ G_A $ are convex fuzzy soft sets then $ F_A \tilde{\cap} G_A $ is convex fuzzy soft set.
\end{thm}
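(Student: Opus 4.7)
The plan is to unfold the standard definition of the fuzzy soft intersection, which sets $\mu_{(F\cap G)(e_i)}(h)=\min\{\mu_{F(e_i)}(h),\mu_{G(e_i)}(h)\}$ pointwise, and then push the convexity inequalities for $F_A$ and $G_A$ through the $\min$ using the elementary fact that $\min\{\min\{a,b\},\min\{c,d\}\}=\min\{\min\{a,c\},\min\{b,d\}\}$. Since both required conditions in Definition~3.1 are of exactly the same shape (one for a fixed parameter $e_i$, the other for the intersection $\cap_i F(e_i)$ over parameters), the work is really one calculation applied twice.

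For condition (i), fix a parameter $e_i$, points $h_1,h_2\tilde{\in}\mathbb{R}$, and $\lambda\tilde{\in}[0,1]$. I would start from
\[
\mu_{(F\tilde{\cap}G)(e_i)}\bigl(\lambda h_1+(1-\lambda)h_2\bigr)=\min\bigl\{\mu_{F(e_i)}(\lambda h_1+(1-\lambda)h_2),\,\mu_{G(e_i)}(\lambda h_1+(1-\lambda)h_2)\bigr\},
\]
apply the convexity of $F_A$ and of $G_A$ to each term inside the outer $\min$, and then regroup the resulting four-term minimum as
\[
\min\bigl\{\min\{\mu_{F(e_i)}(h_1),\mu_{G(e_i)}(h_1)\},\,\min\{\mu_{F(e_i)}(h_2),\mu_{G(e_i)}(h_2)\}\bigr\},
\]
which is exactly $\min\{\mu_{(F\tilde{\cap}G)(e_i)}(h_1),\mu_{(F\tilde{\cap}G)(e_i)}(h_2)\}$, as required.

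For condition (ii), the argument is identical once one observes that intersection commutes with intersection: $\bigcap_i (F\tilde{\cap}G)(e_i)=\bigl(\bigcap_i F(e_i)\bigr)\tilde{\cap}\bigl(\bigcap_i G(e_i)\bigr)$, so $\mu_{\cap_i (F\cap G)(e_i)}$ is again the pointwise minimum of $\mu_{\cap_i F(e_i)}$ and $\mu_{\cap_i G(e_i)}$. The same four-line computation then applies verbatim, using the hypothesis that both $F_A$ and $G_A$ satisfy Definition~3.1(ii).

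The only mild obstacle is notational: the paper does not spell out the membership rule for $F_A\tilde{\cap}G_A$ in this section, so I would briefly invoke the standard pointwise-$\min$ convention (as used implicitly in the topology definitions quoted from \cite{roy}) before the main chain of inequalities. Beyond that, no nontrivial step is required; the result follows from the monotonicity and associativity of $\min$.
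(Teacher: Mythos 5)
Your proposal is correct and follows essentially the same route as the paper: both arguments unfold the intersection's membership function as a pointwise $\min$, apply the convexity inequality of $F_A$ and $G_A$ to each component, and regroup the resulting four-term minimum. If anything, your version is slightly more careful than the paper's, since you treat conditions (i) and (ii) of the definition separately and explicitly state the pointwise-$\min$ convention for $F_A\tilde{\cap}G_A$, which the paper leaves implicit.
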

\begin{proof}
Let  $ F_A $ and $ G_A $ are convex fuzzy soft sets in $ (U,E). $ Let $ h_1, h_2 \tilde{\in}U $ and $ e_1, e_2 \tilde{\in} E.$
Now convert objects $ h_1, h_2 $ are integers namely 1,2. Therefore
\begin{align*} &\mu_{F(e_1\tilde{\cap}e_2)} (\lambda.h_{1}+(1-\lambda).h_{2})\geq \min\{\mu_{F(e_1\tilde{\cap}e_2)}(h_1),\mu_{F(e_1\tilde{\cap}e_2)}(h_2)\}\\
&\text{and}\\
&\mu_{G(e_1\tilde{\cap}e_2)} (\lambda.h_{1}+(1-\lambda).h_{2})\geq \min\{\mu_{G(e_1\tilde{\cap}e_2)}(h_1),\mu_{G(e_1\tilde{\cap}e_2)}(h_2)\},\end{align*}
  where  $\lambda\tilde{\in}[0,1]$  and  $h_1, h_2\tilde{\in} \mathbb{R}.$\\
Now
\begin{align*}
&\mu_{F(e_1\tilde{\cap}e_2)\tilde{\cap}G(e_1\tilde{\cap}e_2)} (\lambda.h_{1}+(1-\lambda)h_{2})\\
&=\mu_{F(e_1\tilde{\cap}e_2)} (\lambda.h_{1}+(1-\lambda).h_{2}) \tilde{\cap}\mu_{G(e_1\tilde{\cap}e_2)} (\lambda.h_{1}+(1-\lambda).h_{2})\\
&\geq \min\{\mu_{F(e_1\tilde{\cap}e_2)}(h_1),\mu_{F(e_1\tilde{\cap}e_2)}(h_2)\}\tilde{\cap} \min\{\mu_{G(e_1\tilde{\cap}e_2)}(h_1),\mu_{G(e_1\tilde{\cap}e_2)}(h_2)\} \\
&\geq \min\{\mu_{F(e_1\tilde{\cap}e_2)\tilde{\cap}G(e_1\tilde{\cap}e_2)}(h_1),\mu_{F(e_1\tilde{\cap}e_2)\tilde{\cap}G(e_1\tilde{\cap}e_2)}(h_2)\}
 \text{~where~}  \lambda\tilde{\in}[0,1] \text{~and~}  h_1, h_2\tilde{\in} \mathbb{R}.\end{align*}
 Hence proved.
\end{proof}
\begin{thm}
If $ F_A $ and $ G_A $ are convex fuzzy soft sets and $ F_A \tilde{\subseteq} G_A $  then $ F_A \tilde{\cup}G_A $ and $ F_A \tilde{\cap} G_A $  are convex fuzzy soft set.
\end{thm}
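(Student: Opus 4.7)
The plan is to exploit the hypothesis $F_A \tilde{\subseteq} G_A$ to reduce each of $F_A \tilde{\cup} G_A$ and $F_A \tilde{\cap} G_A$ to one of the two given convex fuzzy soft sets. I would begin by noting that, under the standard pointwise definitions of fuzzy soft union and intersection via $\max$ and $\min$ of membership values, the containment $F_A \tilde{\subseteq} G_A$ is equivalent to $\mu_{F(e_i)}(h) \leq \mu_{G(e_i)}(h)$ for every $e_i \tilde{\in} A$ and every $h \tilde{\in} U$.

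From here the two conclusions become essentially immediate. For the intersection, $\mu_{(F\tilde{\cap}G)(e_i)}(h) = \min\{\mu_{F(e_i)}(h), \mu_{G(e_i)}(h)\} = \mu_{F(e_i)}(h)$, so $F_A \tilde{\cap} G_A = F_A$, which is convex by hypothesis. (This case is in fact already covered by the previous theorem, where no containment assumption was needed.) For the union, $\mu_{(F\tilde{\cup}G)(e_i)}(h) = \max\{\mu_{F(e_i)}(h), \mu_{G(e_i)}(h)\} = \mu_{G(e_i)}(h)$, so $F_A \tilde{\cup} G_A = G_A$, which is also convex by hypothesis. The convexity inequality of Definition 3.1 then transfers unchanged to each of the union and the intersection.

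The only real obstacle is notational bookkeeping, since Definition 3.1 has a part (i) that ranges over each parameter and a part (ii) that involves the intersection over parameters. For (i) the argument is pointwise as above. For (ii) the identities $\tilde{\bigcap}_i (F \tilde{\cup} G)(e_i) = \tilde{\bigcap}_i G(e_i)$ and $\tilde{\bigcap}_i (F \tilde{\cap} G)(e_i) = \tilde{\bigcap}_i F(e_i)$ follow again from the containment, so the required inequality for the combined object collapses to the one already assumed for $G_A$ or $F_A$. Consequently, no genuinely new manipulation beyond that carried out in the proof of the previous theorem is required, and the result follows by assembling these observations.
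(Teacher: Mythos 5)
Your proof is correct, and it is essentially the argument the paper intends: the paper's own proof is just the word ``Obvious,'' but the collapse $F_A \tilde{\cap} G_A = F_A$ and $F_A \tilde{\cup} G_A = G_A$ under the containment hypothesis is exactly the reasoning the authors use explicitly elsewhere (e.g.\ in the proof of Proposition 3.10). No further comment is needed.
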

\begin{proof}
Obvious.
\end{proof}
\begin{thm}
The union of any family of convex fuzzy soft sets is not necessarily a convex fuzzy soft set.
\end{thm}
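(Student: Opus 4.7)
The plan is to produce an explicit counterexample, since the statement is a negative existence claim. The classical fact underlying it is that in ordinary fuzzy set theory the union of two convex fuzzy sets can fail to be convex: if two triangular fuzzy numbers have well-separated peaks, their pointwise maximum has two peaks with a valley between them, and convexity is violated at any point in that valley. I would simply lift this phenomenon to the fuzzy soft setting.

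Concretely, I would fix the universe $U=\{h_1,h_2,h_3,h_4,h_5\}$, identify $h_t$ with the integer $t$ as in Definition 3.1, take a single-parameter index set $A=\{e_1\}$, and define two fuzzy soft sets
\[
F(e_1)=\{(h_1,0.9),(h_2,0.7),(h_3,0.3),(h_4,0.1),(h_5,0.0)\},
\]
\[
G(e_1)=\{(h_1,0.0),(h_2,0.1),(h_3,0.3),(h_4,0.7),(h_5,0.9)\}.
\]
Because each of these has a monotone membership profile along the integers $1,\dots,5$, both satisfy $\mu(\lambda h_1+(1-\lambda)h_2)\ge \min\{\mu(h_1),\mu(h_2)\}$ for any $\lambda\in[0,1]$ and any two domain points, so $F_A$ and $G_A$ are convex in the sense of Definition 3.1. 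This is the routine verification part and I would just note it, as monotonicity trivially implies convexity.

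I would then compute the union $(F\tilde{\cup}G)(e_1)$ by taking pointwise maxima, giving membership values $0.9,\;0.7,\;0.3,\;0.7,\;0.9$ at $h_1,\ldots,h_5$. Taking $h_1=1$, $h_2=5$ and $\lambda=\tfrac12$ yields $\lambda h_1+(1-\lambda)h_2=3=h_3$, while
\[
\mu_{F(e_1)\tilde{\cup}G(e_1)}(h_3)=0.3 \;<\; 0.9 =\min\{\mu_{F(e_1)\tilde{\cup}G(e_1)}(h_1),\mu_{F(e_1)\tilde{\cup}G(e_1)}(h_5)\}.
\]
Hence condition (i) of Definition 3.1 fails for $F_A\tilde{\cup}G_A$, and therefore the union is not convex, proving the theorem.

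The only thing one has to be even slightly careful about is the bookkeeping with the parameter set: I keep a single shared parameter $e_1$ so that the union on parameters reduces to the standard fuzzy-set union pointwise, and both conditions (i) and (ii) of Definition 3.1 collapse to the same inequality. I do not expect any genuine obstacle; the main "content" of the proof is picking numerics that make the valley violate convexity, and the symmetric increasing/decreasing choice above makes the violation as transparent as possible.
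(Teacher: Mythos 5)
Your counterexample is correct, and it actually does more than the paper, whose entire ``proof'' of this theorem is the single sentence ``The proof is straightforward.'' Your two monotone membership profiles on $U=\{h_1,\dots,h_5\}$ (identified with $1,\dots,5$) are each quasi-concave in the sense of condition (i) of the paper's Definition of convexity, precisely because a monotone function evaluated between two points always lies weakly above the smaller endpoint value; the pointwise maximum $0.9,\,0.7,\,0.3,\,0.7,\,0.9$ then visibly fails at $h_3=\tfrac12 h_1+\tfrac12 h_5$ since $0.3<0.9=\min\{\mu(h_1),\mu(h_5)\}$. Your bookkeeping remark about using a single parameter so that conditions (i) and (ii) coincide is also the right thing to say. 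The only stylistic point worth adding is that, to match the paper's own worked examples (which always use two parameters), you could duplicate the same membership values under a second parameter $e_2$; nothing in the argument changes, since the intersection over parameters then equals the common profile and the same violation at $h_3$ applies. In short: the proposal is a complete and correct proof by counterexample of a statement the paper merely asserts.
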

\begin{proof}
The proof is straightforward.
\end{proof}
\begin{defn}
	Let 	$F_A=\{ F(e_{i})=(h_{t}, \mu_{F(e_i)}(h_t)); h_t\tilde{\in}U;  t=1,2,...m; i=1,2,...,n\} $ be a fuzzy soft set in $(U,E).$ Then $ F_A $ is called concave fuzzy soft set if and only if membership function $ \mu_{F(e_i)} $ satisfies following conditions:
\begin{enumerate}
\item[(i)] 	each parameter $ e_i $ of $ F_A,$\\
		$\mu_{F(e_i)} (\lambda.h_{1}+(1-\lambda).h_{2})\leq \max\{\mu_{F(e_i)}(h_1),\mu_{F(e_i)}(h_2)\}$  ~where~  $\lambda\tilde{\in}[0,1] $ ~and~ $ h_1, h_2\tilde{\in} \mathbb{R}. $
\item[(ii)] $\mu_{\cap_iF(e_i)} (\lambda.h_{1}+(1-\lambda).h_{2})\leq \max\{\mu_{\cap_iF(e_i)}(h_1),\mu_{\cap_iF(e_i)}(h_2)\}$  ~where~  $\lambda\tilde{\in}[0,1]. $ and $ h_1, h_2\tilde{\in} \mathbb{R}.$		
\end{enumerate}
Otherwise it is non-concave fuzzy soft set.
\end{defn}
\begin{problem}
	Let \begin{align*}N_A=\{N(e_1)=\{(h_1, 0.9),(h_2, 0.4), (h_3, 0.0), (h_4, 0.2), (h_5, 0.8)\}\\
	N(e_2)=\{(h_1, 0.7),(h_2, 0.1), (h_3, 0.0), (h_4, 0.3), (h_5, 0.8)\}\}.\end{align*}  	Therefore $ N_A $ be a concave fuzzy soft set.
	\end{problem}
\begin{prop}
If $ F_A $ and $ G_A $ are concave fuzzy soft sets. $ F_A \tilde{\cap} G_A $  and $ F_A \tilde{\cup} G_A $  are concave fuzzy soft sets.
\end{prop}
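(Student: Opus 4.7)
The plan is to parallel the proof of Theorem 3.3 almost verbatim, but with the direction of the defining inequality reversed (now $\leq$) and with $\max$ in place of $\min$ on the right-hand side. There are two claims (intersection and union) and two membership conditions, (i) and (ii) in Definition 3.6, to verify, giving four essentially identical computations. I would fix arbitrary $h_1,h_2\tilde{\in}\mathbb{R}$, $\lambda\tilde{\in}[0,1]$, and a parameter $e_i\tilde{\in}E$, then handle each combination in turn.

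For $F_A\tilde{\cup}G_A$, I would expand $\mu_{F(e_i)\tilde{\cup}G(e_i)}$ as the pointwise maximum of $\mu_{F(e_i)}$ and $\mu_{G(e_i)}$, apply the concavity of $F_A$ and $G_A$ inside each argument, and then invoke the identity $\max\{\max\{a,b\},\max\{c,d\}\}=\max\{\max\{a,c\},\max\{b,d\}\}$ to regroup the upper bound as $\max\{\mu_{F\tilde{\cup}G}(h_1),\mu_{F\tilde{\cup}G}(h_2)\}$. This discharges condition (i) immediately and requires no nontrivial lattice identity, so I expect the union case to be the easy half.

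For $F_A\tilde{\cap}G_A$, the analogous computation begins with $\mu_{F\tilde{\cap}G}=\min\{\mu_F,\mu_G\}$, applies concavity of each factor, and yields the upper bound $\min\{\max\{\mu_F(h_1),\mu_F(h_2)\},\max\{\mu_G(h_1),\mu_G(h_2)\}\}$. Recasting this as $\max\{\min\{\mu_F(h_1),\mu_G(h_1)\},\min\{\mu_F(h_2),\mu_G(h_2)\}\}$ is the step I expect to be the main obstacle, since the would-be identity $\min\{\max(a,b),\max(c,d)\}=\max\{\min(a,c),\min(b,d)\}$ is the formal dual of the effortless $\min$-of-$\min$ identity used in Theorem 3.3 but, unlike it, is not valid for arbitrary reals. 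I would attack this by a case analysis on the ordering of the four membership values $\mu_F(h_1),\mu_F(h_2),\mu_G(h_1),\mu_G(h_2)$; the cases in which the larger value of $\mu_F$ and the larger value of $\mu_G$ occur at the same $h$-index collapse to equality, and the remaining cases are the ones where I would need either a coupling hypothesis on $F_A,G_A$ or a sharper use of the concavity inequality.

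Finally, condition (ii) in Definition 3.6 is verified by repeating the same argument with $\cap_i F(e_i)$ and $\cap_i G(e_i)$ in place of $F(e_i)$ and $G(e_i)$, exploiting the distributive behavior of $\tilde{\cap}$ and $\tilde{\cup}$ over the parameterwise $\max$ and $\min$. Once (i) is settled for both $\tilde{\cap}$ and $\tilde{\cup}$, condition (ii) should follow by exactly the same template with no new ideas required.
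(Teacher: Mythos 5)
Your treatment of the union case is correct and is all the paper itself offers for that half (its proof writes out only the intersection case and closes with ``Similarly we prove that $F_A\tilde{\cup}G_A$\dots''); the regrouping of nested maxima that you invoke is valid and settles condition (i), and condition (ii) follows the same template as you say. The real content of your proposal is that you have located the precise obstruction in the intersection case, and it is a genuine one: the inequality
\[
\min\bigl\{\max\{a,b\},\max\{c,d\}\bigr\}\ \tilde{\leq}\ \max\bigl\{\min\{a,c\},\min\{b,d\}\bigr\}
\]
fails in general (take $a=d=1$, $b=c=0$), and this is exactly the unjustified step in the last displayed line of the paper's own proof, where $\max\{\mu_{F}(h_1),\mu_{F}(h_2)\}\tilde{\cap}\max\{\mu_{G}(h_1),\mu_{G}(h_2)\}$ is bounded above by $\max\{\mu_{F\tilde{\cap}G}(h_1),\mu_{F\tilde{\cap}G}(h_2)\}$. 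So the gap you could not close is a gap in the paper, not a defect of your plan.

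Moreover, no case analysis or sharper use of concavity will rescue the intersection half, because it is false for the definition as given. Take $U=\{h_1,h_2,h_3\}$ identified with $1,2,3$ and a single parameter, with $\mu_{F(e_1)}$ taking the values $1,\,0.5,\,0$ and $\mu_{G(e_1)}$ the values $0,\,0.5,\,1$ at $h_1,h_2,h_3$. Both membership functions are monotone, hence satisfy the $\leq\max$ condition of Definition 3.6, but $\mu_{F(e_1)\tilde{\cap}G(e_1)}$ takes the values $0,\,0.5,\,0$ and violates condition (i) at $h_2=\tfrac12 h_1+\tfrac12 h_3$, since $0.5\not\leq\max\{0,0\}$. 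This is the expected dual of Theorem 3.5: just as unions of convex fuzzy soft sets need not be convex, intersections of concave ones need not be concave. Your instinct that an additional coupling hypothesis is required is the right one; under $F_A\tilde{\subseteq}G_A$ the intersection collapses to $F_A$ and the claim becomes trivial, which is exactly the device the paper uses elsewhere (Theorem 3.4 and Proposition 3.10).
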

\begin{proof}
	Let  $ F_A $ and $ G_A $ are concave fuzzy soft sets in $ (U,E). $ Let $ h_1, h_2 \tilde{\in}U $ and $ e_1, e_2 \tilde{\in} E.$
	Now convert objects $ h_1, h_2 $ are integers namely 1,2. Therefore
	\begin{align*} &\mu_{F(e_1\tilde{\cap}e_2)} (\lambda.h_{1}+(1-\lambda).h_{2})\leq \max\{\mu_{F(e_1\tilde{\cap}e_2)}(h_1),\mu_{F(e_1\tilde{\cap}e_2)}(h_2)\}\\
		&\text{and}\\
		&\mu_{G(e_1\tilde{\cap}e_2)} (\lambda.h_{1}+(1-\lambda).h_{2})\leq \max\{\mu_{G(e_1\tilde{\cap}e_2)}(h_1),\mu_{G(e_1\tilde{\cap}e_2)}(h_2)\},\end{align*}
	where  $\lambda\tilde{\in}[0,1]$  and  $h_1, h_2\tilde{\in} \mathbb{R}.$\\
	Now
	\begin{align*}
		&\mu_{F(e_1\tilde{\cap}e_2)\tilde{\cap}G(e_1\tilde{\cap}e_2)} (\lambda.h_{1}+(1-\lambda)h_{2})\\
		&=\mu_{F(e_1\tilde{\cap}e_2)} (\lambda.h_{1}+(1-\lambda).h_{2}) \tilde{\cap}\mu_{G(e_1\tilde{\cap}e_2)} (\lambda.h_{1}+(1-\lambda).h_{2})\\
		&\leq \max\{\mu_{F(e_1\tilde{\cap}e_2)}(h_1),\mu_{F(e_1\tilde{\cap}e_2)}(h_2)\}\tilde{\cap} \max\{\mu_{G(e_1\tilde{\cap}e_2)}(h_1),\mu_{G(e_1\tilde{\cap}e_2)}(h_2)\} \\
		&\leq \max\{\mu_{F(e_1\tilde{\cap}e_2)\tilde{\cap}G(e_1\tilde{\cap}e_2)}(h_1),\mu_{F(e_1\tilde{\cap}e_2)\tilde{\cap}G(e_1\tilde{\cap}e_2)}(h_2)\}
		\text{~where~}  \lambda\tilde{\in}[0,1] \text{~and~}  h_1, h_2\tilde{\in} \mathbb{R}.\end{align*}
	Hence $ F_A \tilde{\cap} G_A $ be a concave fuzzy soft sets.\\
	Similarly we prove that  $ F_A \tilde{\cup} G_A $ be a concave fuzzy soft set.
\end{proof}
\begin{prop}\label{prop39}
Let $ F_A $ be a convex fuzzy soft set then $ F^C_A $ be a concave fuzzy soft sets.
\end{prop}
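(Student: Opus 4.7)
The plan is to exploit the standard fuzzy-soft complement formula $\mu_{F^{C}(e_{i})}(h)=1-\mu_{F(e_{i})}(h)$ together with the elementary identity $\max\{1-a,\,1-b\}=1-\min\{a,b\}$ valid for all $a,b\tilde{\in}[0,1]$. This turns the convexity inequality for $F_{A}$ directly into the concavity inequality for $F_{A}^{C}$.

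First I would recall both definitions: $F_{A}$ is convex means that for each parameter $e_{i}$ and for all $h_{1},h_{2}\tilde{\in}\mathbb{R}$, $\lambda\tilde{\in}[0,1]$,
\[
\mu_{F(e_{i})}(\lambda h_{1}+(1-\lambda)h_{2})\;\geq\;\min\{\mu_{F(e_{i})}(h_{1}),\mu_{F(e_{i})}(h_{2})\},
\]
with the analogous inequality holding for $\mu_{\cap_{i}F(e_{i})}$; concavity of $F_{A}^{C}$ is the same statement with $\mu_{F(e_{i})}$ replaced by $\mu_{F^{C}(e_{i})}$ and $\min$ replaced by $\max$.

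Next, substituting $\mu_{F^{C}(e_{i})}=1-\mu_{F(e_{i})}$ into the desired concavity inequality and applying the max/min complementation identity reduces the right-hand side to $1-\min\{\mu_{F(e_{i})}(h_{1}),\mu_{F(e_{i})}(h_{2})\}$, while the left-hand side becomes $1-\mu_{F(e_{i})}(\lambda h_{1}+(1-\lambda)h_{2})$. Cancelling the $1$'s and reversing the inequality (because of the negative sign) yields exactly the convexity condition for $F_{A}$, which is assumed. This proves condition (i) of the concavity definition. For condition (ii) I would repeat the same three-line computation with $\mu_{F(e_{i})}$ replaced by $\mu_{\cap_{i}F(e_{i})}$, since the argument uses only the algebraic relationship between a membership function and its complement, and convexity of $F_{A}$ supplies both inequalities.

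There is really no hard step: the only thing to be careful about is that the definition of complement $\mu_{F^{C}}=1-\mu_{F}$ must extend to the intersected membership $\mu_{\cap_{i}F(e_{i})}$ in the way the excerpt implicitly uses, so I would state this convention explicitly at the beginning of the proof to make both conditions go through by the same symbolic manipulation. Once that convention is fixed, both (i) and (ii) are one-line consequences of the identity $\max\{1-a,1-b\}=1-\min\{a,b\}$.
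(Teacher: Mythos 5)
Your proposal is correct and follows essentially the same route as the paper: apply the complement formula $\mu_{F^{C}(e_{i})}=1-\mu_{F(e_{i})}$ and the identity $\max\{1-a,1-b\}=1-\min\{a,b\}$ to convert the convexity inequality for $F_{A}$ into the concavity inequality for $F_{A}^{C}$, handling condition (ii) by the same computation. The only cosmetic difference is that you work backwards from the desired inequality while the paper chains the inequalities forward, and your explicit remark about extending the complement convention to $\mu_{\cap_{i}F(e_{i})}$ is a point the paper leaves implicit.
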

\begin{proof}
Let  $ F_A $ be a convex fuzzy soft set. Then
\begin{enumerate}
	\item[(i)] 	each parameter $ e_i $ of $ F_A,$ 	$\mu_{F(e_i)} (\lambda.h_{1}+(1-\lambda).h_{2})\geq \min\{\mu_{F(e_i)}(h_1),\mu_{F(e_i)}(h_2)\}\\  \textrm{where~}  \lambda\tilde{\in}[0,1].$ and $h_1, h_2\tilde{\in} \mathbb{R}.$
	\item[(ii)] $\mu_{\cap_iF(e_i)} (\lambda.h_{1}+(1-\lambda).h_{2})\geq \min\{\mu_{\cap_iF(e_i)}(h_1),\mu_{\cap_iF(e_i)}(h_2)\}$\\
where~  $\lambda\tilde{\in}[0,1] $ ~and~ $ h_1, h_2\tilde{\in} \mathbb{R}. $
\end{enumerate}
Now,
(i) 	Each parameter $ e_i $ of $ F^C_A,$ 	\begin{align*} &\mu_{F^C(e_i)} (\lambda.h_{1}+(1-\lambda).h_{2})\\
&=1-\mu_{F(e_i)} (\lambda.h_{1}+(1-\lambda).h_{2})\\
&\leq 1-\min\{\mu_{F(e_i)}(h_1),\mu_{F(e_i)}(h_2)\}\\
&\leq \max\{1-\mu_{F(e_i)}(h_1),1-\mu_{F(e_i)}(h_2)\}\\ &\leq \max\{\mu_{F^C(e_i)}(h_1),\mu_{F^C(e_i)}(h_2)\}, \end{align*}
 where~ $\lambda\tilde{\in}[0,1].$ and $h_1, h_2\tilde{\in} \mathbb{R}.$\\
		(ii)	Similarly we prove that \begin{align*}
	 \mu_{\cap_iF^C(e_i)} (\lambda.h_{1}+(1-\lambda).h_{2})\leq \max\{\mu_{\cap_iF^C(e_i)}(h_1),\mu_{\cap_iF^C(e_i)}(h_2)\} \end{align*}
Hence $F^C_A$ be a concave fuzzy soft sets.
\end{proof}
\begin{prop}
If $ F_A $ and $ G_A $ are  convex and concave fuzzy soft set, respectively and $ F_A \tilde{\subseteq}G_A .$ Then $ F_A \tilde{\cup}G_A $  and $ F_A \tilde{\cap}G_A $ are concave and convex  fuzzy soft set respectively.
\end{prop}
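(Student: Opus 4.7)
The plan is to reduce this proposition to the two preceding results by exploiting the subset hypothesis $F_A \tilde{\subseteq} G_A$. In the standard fuzzy-soft-set setting (which the paper adopts), union corresponds to pointwise maximum of membership degrees and intersection to pointwise minimum. Hence the subset hypothesis $\mu_{F(e_i)}(h) \leq \mu_{G(e_i)}(h)$, valid for every parameter $e_i$ and every $h \tilde{\in} U$, forces
\[
F_A \tilde{\cup} G_A \;=\; G_A \qquad\text{and}\qquad F_A \tilde{\cap} G_A \;=\; F_A.
\]
I would first record this pair of identities as the key step, verifying them parameter-wise from the definitions of $\tilde{\cup}$ and $\tilde{\cap}$ together with the monotonicity of $\max$ and $\min$ under $\mu_{F(e_i)} \leq \mu_{G(e_i)}$.

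Once the identities are in place, the conclusion is immediate. Since $G_A$ is concave by hypothesis and $F_A \tilde{\cup} G_A = G_A$, the union $F_A \tilde{\cup} G_A$ inherits concavity directly; no new inequality needs to be verified because both defining conditions (the per-parameter one and the intersection-of-parameters one in Definition~3.5) transfer along an equality of fuzzy soft sets. Symmetrically, since $F_A$ is convex and $F_A \tilde{\cap} G_A = F_A$, the intersection is convex, matching the pattern already established in Theorem~3.3 for the intersection of two convex fuzzy soft sets (indeed this is a degenerate case of it).

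I do not anticipate a genuine obstacle here; the only point that deserves care is making sure the identities $F_A \tilde{\cup} G_A = G_A$ and $F_A \tilde{\cap} G_A = F_A$ are spelled out explicitly from the subset assumption, since the proposition as stated does \emph{not} require re-deriving the convexity/concavity inequalities from scratch for the combined sets. If one preferred an ``in-place'' argument, one could alternatively mimic the computation in the proof of Theorem~3.3 and its concave analogue (Proposition~3.8), combining the two inequalities via $\min\{a,b\} \leq \max\{a,b\}$ and the ordering $\mu_{F(e_i)} \leq \mu_{G(e_i)}$; but invoking the reduction to $G_A$ and $F_A$ respectively is the cleanest route and the one I would take.
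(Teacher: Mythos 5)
Your proposal is correct and matches the paper's own argument: the paper likewise deduces $F_A \tilde{\cup} G_A = G_A$ (hence concave) and $F_A \tilde{\cap} G_A = F_A$ (hence convex) directly from $F_A \tilde{\subseteq} G_A$. You actually spell out the second identity more explicitly than the paper does, but the route is the same.
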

\begin{proof}
Since $ F_A \tilde{\subseteq}G_A ,$ therefore we have
 \[ F_A \tilde{\cup}G_A=G_A,\]
  which is concave fuzzy soft set. Then
 $ F_A \tilde{\cap}G_A$
  is a convex fuzzy soft set.
\end{proof}
\begin{defn}
A fuzzy soft set 	$F_A=\{ F(e_{i})=(h_{t}, \mu_{F(e_i)}(h_t)); h_t\tilde{\in}U; t=1,2,...m;  i=1,2,...,n\} $ is called a normalized fuzzy soft set if it satisfied following two conditions:
\begin{enumerate}
	\item[(i)] there is at least one point $h_t\tilde{\in}U $ with $\mu_{F(e_i)}(h_t)=1  $ for each $ e_i. $
	\item[(ii)] there is at least one point $h_t\tilde{\in}U $ with $\mu_{F(\cap_i(e_i))}(h_t)=1  $ ~for~ $\cap_i(e_i). $
\end{enumerate}
Otherwise it is non-normalized.
\end{defn}
\begin{problem}
	Let \begin{align*}K_A=\{K(e_1)=\{(h_1, 0.2), (h_2, 1.0),(h_3, 0.3)\},\\K(e_2)=\{(h_1, 0.1), (h_2, 1.0),(h_3, 0.2)\}\}.\end{align*}  Therefore $ K_A $ is a normalized fuzzy soft set.
\end{problem}
\begin{defn}
A fuzzy soft set	$F_A=\{ F(e_{i})=(h_{t}, \mu_{F(e_i)}(h_t)); h_t\tilde{\in}U; t=1,2,...m; i=1,2,...,n\} $ is a fuzzy soft number if its membership functions $\mu_{F(e_i)} $ is
\begin{enumerate}
	\item[(i)] fuzzy soft convex;
	\item[(ii)] fuzzy soft normalized;
	\item[(iii)] fuzzy soft upper semi-continuous.
	\item[(iv)] $cl\{h_t;\mu_{F(e_i)}(h_t)>0 \} $ is fuzzy soft  compact.
\end{enumerate}
\end{defn}
\begin{prop}
	Fuzzy soft numbers always normalized at same object of each parameters.
\end{prop}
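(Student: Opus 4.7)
The plan is to deduce the claim directly from condition (ii) of the normalized fuzzy soft set definition, which is the stronger of the two normalization requirements. Condition (i) only guarantees, for each individual parameter $e_i$, some point $h_{t_i}$ at which $\mu_{F(e_i)}(h_{t_i})=1$; this by itself does not force a common object. Condition (ii), however, provides a point realizing membership $1$ for the aggregated fuzzy soft set $\cap_i F(e_i)$, and this is exactly the handle I would exploit.

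First, I would start with an arbitrary fuzzy soft number $F_A$ with parameters $e_1,\dots,e_n$. Applying item (ii) of the normalization condition, I obtain an object $h_{t_0}\tilde{\in}U$ such that $\mu_{F(\cap_i e_i)}(h_{t_0})=1$. Next, I would unfold the definition of intersection for fuzzy soft sets, which gives
\begin{equation*}
\mu_{F(\cap_i e_i)}(h_{t_0}) \;=\; \min_{1\le i\le n}\mu_{F(e_i)}(h_{t_0}).
\end{equation*}
Since this minimum equals $1$ and each $\mu_{F(e_i)}(h_{t_0})\in[0,1]$, every term in the minimum must individually equal $1$, i.e.\ $\mu_{F(e_i)}(h_{t_0})=1$ for every parameter $e_i$. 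Thus the single object $h_{t_0}$ simultaneously realizes normalization across all parameters, which is precisely the assertion.

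I would also briefly remark on consistency with the convexity and upper semi-continuity requirements from the definition of fuzzy soft number: these ensure the level set $\{h:\mu_{F(e_i)}(h)=1\}$ is a (closed, convex) subset of $\mathbb{R}$ for each $e_i$, so the common point $h_{t_0}$ sits in the intersection of these level sets, and this intersection is nonempty by the above argument. The main (and really only) obstacle is purely notational: making sure the reader accepts the conventional identification $\mu_{\cap_i F(e_i)}=\min_i \mu_{F(e_i)}$, since the excerpt writes the normalization axiom in terms of the intersection $\cap_i F(e_i)$ rather than pointwise minimum. Once that identification is stated, the proof collapses to a one-line observation about when a minimum of numbers in $[0,1]$ equals $1$.
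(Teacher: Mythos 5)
Your proof is correct, and it rests on the same pivot as the paper's: condition (ii) of the normalization definition together with the identification $\mu_{\cap_i F(e_i)}=\min_i \mu_{F(e_i)}$. The difference is one of direction and completeness. The paper argues by contradiction in a rather schematic way: it supposes $F_A$ is normalized at two distinct objects $h_2$ and $h_3$ for parameters $e_1$ and $e_2$, asserts that then $\mu_{F(e_1\tilde{\cap}e_2)}(h_2)\neq 1$ and $\mu_{F(e_1\tilde{\cap}e_2)}(h_3)\neq 1$, and concludes $F_A$ is not a fuzzy soft number. That assertion tacitly assumes each parameter is normalized at \emph{only} the named object (otherwise $h_2$ could still satisfy $\mu_{F(e_2)}(h_2)=1$), so the paper's argument really only rules out one specific configuration. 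Your direct argument avoids this: taking the object $h_{t_0}$ guaranteed by condition (ii) and observing that a minimum of numbers in $[0,1]$ equals $1$ only when every term equals $1$ immediately produces a single object at which all parameters are simultaneously normalized, with no case analysis and no hidden uniqueness assumption. The concluding remark about convexity and upper semi-continuity is harmless but unnecessary; the one-line minimum observation already closes the proof.
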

\begin{proof}
Suppose $ F_A $ be a fuzzy soft number normalized at two objects $ h_2 $ and $ h_3 $ of parameters $ e_1 $ and $ e_2 $ respectively. Therefore
\[\mu_{F(e_1\tilde{\cap}e_2)}(h_2)\neq 1  \mbox{~and~} \mu_{F(e_1\tilde{\cap}e_2)}(h_3)\neq 1.\]
Therefore $ F_A $ is not a fuzzy soft number. \\
Hence Fuzzy soft numbers always normalized at same object of each parameters.
\end{proof}
\begin{prop}
Complement of fuzzy soft numbers is a concave fuzzy soft sets.
\end{prop}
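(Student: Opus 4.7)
The plan is to reduce this proposition directly to Proposition \ref{prop39}, since being a fuzzy soft number is defined to entail fuzzy soft convexity. Concretely, I would begin by letting $F_A$ be any fuzzy soft number and unpacking the definition: by condition (i) in the definition of a fuzzy soft number, the membership function $\mu_{F(e_i)}$ is fuzzy soft convex, meaning $F_A$ itself is a convex fuzzy soft set in the sense introduced earlier in the section. The other three defining properties (normalization, upper semi-continuity, compact support) play no role here and can be ignored.

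Once $F_A$ is identified as a convex fuzzy soft set, the conclusion is immediate from Proposition \ref{prop39}, which states that the complement of a convex fuzzy soft set is a concave fuzzy soft set. I would simply invoke this result with $F_A$ and record both verifications it supplies, namely that for every parameter $e_i$,
\[
\mu_{F^C(e_i)}\bigl(\lambda h_1+(1-\lambda)h_2\bigr)\leq \max\{\mu_{F^C(e_i)}(h_1),\mu_{F^C(e_i)}(h_2)\},
\]
and the analogous inequality with $\mu_{\cap_i F^C(e_i)}$, for all $\lambda\tilde{\in}[0,1]$ and $h_1,h_2\tilde{\in}\mathbb{R}$. These together are exactly the two conditions in the definition of a concave fuzzy soft set, so $F_A^C$ is concave.

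There is no real obstacle here; the only thing to be careful about is to mention explicitly why convexity alone (rather than the full fuzzy soft number structure) is what is being used, so that the reader sees the statement is just a specialization of Proposition \ref{prop39}. If desired, one could additionally remark that the converse fails in general, since the complement will typically lose normalization, upper semi-continuity, or the compactness of the support set $cl\{h_t:\mu_{F(e_i)}(h_t)>0\}$, so $F_A^C$ is not itself a fuzzy soft number; but this observation is not required for the proof.
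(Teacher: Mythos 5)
Your proposal is correct and matches the paper's own argument, which simply cites Proposition \ref{prop39}: a fuzzy soft number is by definition fuzzy soft convex, so its complement is concave by that proposition. Your write-up just makes the reduction explicit, which is a reasonable elaboration of the same route.
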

\begin{proof}
Follows from the Proposition \ref{prop39}.
\end{proof}

\begin{defn}
Let $F_A=\{ F(e_{i})=(h_{t}, \mu_{F(e_i)}(h_t)); h_t\tilde{\in}U; t=1,2,...m; i=1,2,...,n\} $ and   $~G_A=\{ G(e_{i})=(h_{t}, \mu_{G(e_i)}(h_t)); h_t\tilde{\in}U; t=1,2,...m; i=1,2,...,n\} $ are two fuzzy soft numbers. Here we denote $ \mu_F $ and $ \mu_G $ are grade membership of objects of fuzzy soft sets $ F_A $ and $ G_A $ respectively.  Then four arithmetic operations for fuzzy soft numbers are defined as follows:\\
For $t=1,2,...,m$
\begin{enumerate}
\item[(i)] $F_A\tilde{+}G_A=\{ F(e_{i})\tilde{+} G(e_{i})\}=\{(h_{t},  \mu_F+\mu_G-\mu_F.\mu_G)\} .$
\item[(ii)] $F_A\tilde{-}G_A=\{ F(e_{i})\tilde{-} G(e_{i})\}=\{(h_{t}, \mu_F.\mu_G)\} .$
\item[(iii)] $F_A\tilde{\times}G_A=\{ F(e_{i})\tilde{\times} G(e_{i})\}=\left\{\left(h_{t},\frac{ \mu_F.\mu_G}{\max(\mu_F,\mu_G)}\right)\right\} .$
\item[(iv)] $F_A\tilde{\div}G_A=\{ F(e_{i})\tilde{\div}  G(e_{i})\}=\left\{\left(h_{t},\frac{ \mu_F}{\max(\mu_F,\mu_G)}\right)\right\} .$
\end{enumerate}
\end{defn}
\begin{problem}\label{prob317}
	Let \begin{align*}F_A=\{F(e_1)=\{(h_1, 0.0), (h_2, 0.6),(h_3, 1.0), (h_4, 0.8), (h_5, 0.1)\},\\F(e_2)=\{(h_1, 0.1), (h_2, 0.7),(h_3, 1.0), (h_4, 0.8), (h_5, 0.0)\}\}\end{align*} and
	 \begin{align*}G_A=\{G(e_1)=\{(h_1, 0.1), (h_2, 0.8),(h_3, 1.0), (h_4, 0.6), (h_5, 0.0)\},\\ G(e_2)=\{(h_1, 0.3), (h_2, 0.9),(h_3, 1.0), (h_4, 0.8), (h_5, 0.2)\}\}\end{align*} are two fuzzy soft numbers. Then
\begin{enumerate}
\item[(i)]$F_A\tilde{+}G_A= \{e_1=\{(h_1, 0.10), (h_2, 0.92),(h_3, 1.00), (h_4, 0.92), (h_5, 0.10)\},\\$  $e_2=\{(h_1, 0.37), (h_2, 0.97),(h_3, 1.00), (h_4, 0.96), (h_5, 0.10)\}\} $
\item[(ii)]$F_A\tilde{-}G_A= \{e_1=\{(h_1, 0.00), (h_2, 0.48),(h_3, 1.00), (h_4, 0.48), (h_5, 0.00)\},\\$  $e_2=\{(h_1, 0.03), (h_2, 0.63),(h_3, 1.00), (h_4, 0.64), (h_5, 0.00)\}\} $
\item[(iii)]$F_A\tilde{\times}G_A= \{e_1=\{(h_1, 0.00), (h_2, 0.60),(h_3, 1.00), (h_4, 0.60), (h_5, 0.00)\},\\$  $e_2=\{(h_1, 0.10), (h_2, 0.70),(h_3, 1.00), (h_4, 0.80), (h_5, 0.00)\}\} $
\item[(iv)]$F_A\tilde{\div}G_A= \{e_1=\{(h_1, 0.00), (h_2, 0.75),(h_3, 1.00), (h_4, 1.00), (h_5, 1.00)\},\\$  $e_2=\{(h_1, 0.34), (h_2, 0.78),(h_3, 1.00), (h_4, 1.00), (h_5, 1.00)\}\}.$
\end{enumerate}
\end{problem}
\begin{prop}
	If membership value of same objects of both soft numbers are zero, then multiplication and division operations of that soft numbers are undefined.
\end{prop}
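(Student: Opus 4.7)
The plan is to prove this directly from the definitions of $\tilde{\times}$ and $\tilde{\div}$ given in the preceding definition. I would let $F_A$ and $G_A$ be two fuzzy soft numbers and fix an object $h_t \tilde{\in} U$ for which the membership values coincide at zero, i.e.\ $\mu_F(h_t)=0$ and $\mu_G(h_t)=0$. The whole argument then reduces to inspecting the formulas for the $t$-th coordinate of the two operations.

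For multiplication, the defining formula gives the $t$-th membership value as
\[
\frac{\mu_F(h_t)\cdot \mu_G(h_t)}{\max(\mu_F(h_t),\mu_G(h_t))} = \frac{0\cdot 0}{\max(0,0)} = \frac{0}{0},
\]
and similarly for division we obtain
\[
\frac{\mu_F(h_t)}{\max(\mu_F(h_t),\mu_G(h_t))} = \frac{0}{0}.
\]
Since $\tfrac{0}{0}$ is not a well-defined real number, the membership value of $h_t$ under either $F_A \tilde{\times} G_A$ or $F_A \tilde{\div} G_A$ fails to exist, so the operations cannot be evaluated at $h_t$. This is essentially the whole argument.

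To round off the write-up I would note that the problem is specifically the vanishing of the denominator $\max(\mu_F,\mu_G)$, which under the hypothesis equals $0$, so the quotients used in both (iii) and (iv) of the preceding definition are meaningless; thus $F_A\tilde{\times}G_A$ and $F_A\tilde{\div}G_A$ are undefined at such an object $h_t$. There is essentially no obstacle: the proof is a one-line substitution into the definitions. The only thing worth being careful about is stating precisely that "undefined" here means the denominator is $0$, so the resulting fuzzy soft set has no well-defined membership function at $h_t$, rather than some convention (such as setting the value to $0$) being implicitly in force.
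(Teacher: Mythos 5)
Your argument is correct and is exactly the route the paper intends: the paper's proof is simply ``Follows from definition,'' and your write-up just makes explicit that the denominator $\max(\mu_F,\mu_G)$ vanishes, so the quotients in (iii) and (iv) become $\tfrac{0}{0}$. No gap; you have merely filled in the one-line substitution the paper leaves implicit.
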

\begin{proof}
Follows from definition.	
\end{proof}

\begin{prop}
Let $ F_A $ and $ G_A $ are two fuzzy soft numbers, then $F_A\tilde{+}G_A,$ $ F_A\tilde{-}G_A$ and $F_A\tilde{\times}G_A    $ are also  fuzzy soft numbers.
\end{prop}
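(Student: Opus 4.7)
The plan is to check, for each of the three operations $\tilde{+}$, $\tilde{-}$, $\tilde{\times}$, that the resulting fuzzy soft set satisfies the four clauses of the definition of a fuzzy soft number: convexity, normalization, upper semi-continuity, and compactness of the closure of the strict support. I would organize the argument property-by-property rather than operation-by-operation, since the three operations behave similarly under each clause.

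I would first dispatch the easier clauses. By Proposition 3.14 applied to $F_A$ and $G_A$, one may take an object $h^{*}$ at which both membership functions equal $1$ (this is the tacit scenario in which all three operations preserve normalization). Then direct substitution gives $\mu_{F\tilde{+}G}(h^{*})=1+1-1=1$, $\mu_{F\tilde{-}G}(h^{*})=1$, and $\mu_{F\tilde{\times}G}(h^{*})=1$, establishing normalization. For upper semi-continuity, each combining function $\phi(x,y)\in\{x+y-xy,\;xy,\;xy/\max(x,y)\}$ is continuous and non-decreasing in each argument on $[0,1]^{2}$ (away from the origin, for the third), so its composition with the pair $(\mu_F,\mu_G)$ inherits upper semi-continuity. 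For compact support, note that $\mathrm{supp}(F\tilde{+}G)\subseteq\mathrm{supp}(F)\cup\mathrm{supp}(G)$ and $\mathrm{supp}(F\tilde{-}G)$, $\mathrm{supp}(F\tilde{\times}G)\subseteq\mathrm{supp}(F)\cap\mathrm{supp}(G)$, and finite unions and intersections of compacta are compact.

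The convexity clause is the substantive one. Writing $h_{\lambda}=\lambda h_{1}+(1-\lambda)h_{2}$ and applying convexity of $\mu_{F}$ and $\mu_{G}$ separately yields $\mu_{F}(h_{\lambda})\geq\min\{\mu_{F}(h_{1}),\mu_{F}(h_{2})\}$ and similarly for $\mu_{G}$. I would then prove, as a one-line lemma, that any $\phi\colon[0,1]^{2}\to[0,1]$ non-decreasing in each argument satisfies $\phi(\mu_{F}(h_{\lambda}),\mu_{G}(h_{\lambda}))\geq\min\{\phi(\mu_{F}(h_{1}),\mu_{G}(h_{1})),\phi(\mu_{F}(h_{2}),\mu_{G}(h_{2}))\}$ provided the pair $(\mu_{F},\mu_{G})$ is comonotone on $\{h_{1},h_{2}\}$ (both attain their minimum at the same endpoint). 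Under the common-peak assumption, $\mu_{F}$ and $\mu_{G}$ are indeed comonotone on any pair of points lying on the same side of $h^{*}$, since each is a convex fuzzy set peaking at $h^{*}$ and hence monotone on either side. When $h_{1}$ and $h_{2}$ straddle $h^{*}$, the point $h^{*}$ itself lies on the segment $[h_{1},h_{2}]$ and $\mu_{F}(h^{*})=\mu_{G}(h^{*})=1$ dominates, so $\mu_{\phi}(h_{\lambda})$ picks up the maximal value somewhere on the segment. Combining these cases gives the convexity inequality for each of the three operations.

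The main obstacle is precisely the crossed case of the convexity argument: a generic monotone $\phi$ fails to preserve quasi-concavity when its two inputs peak at different points, as a direct $2$-point check with $\mu_{F}(h_{1})=\mu_{G}(h_{2})=1$ and $\mu_{F}(h_{2})=\mu_{G}(h_{1})=0$ exhibits. This is why Proposition 3.14 and the implicit hypothesis that $F_{A}$ and $G_{A}$ share a normalization point are essential; I would flag this dependency at the outset and then dispatch the three operations uniformly via the monotone-composition lemma above.
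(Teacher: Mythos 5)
Your proposal is correct under the hypothesis you flag, and it takes a genuinely different and considerably more careful route than the paper. The paper argues operation-by-operation: for $F_A\tilde{+}G_A$ and $F_A\tilde{-}G_A$ it simply \emph{asserts} that since $\mu_F$ and $\mu_G$ are convex and normalized, so are $\mu_F+\mu_G-\mu_F\mu_G$ and $\mu_F\mu_G$, with no supporting computation; for $F_A\tilde{\times}G_A$ it splits into the cases $F_A\tilde{\subseteq}G_A$, $G_A\tilde{\subseteq}F_A$ and $F_A=G_A$, which are not exhaustive (although, since $\mu_F\mu_G/\max(\mu_F,\mu_G)=\min(\mu_F,\mu_G)$, that operation is just intersection and its convexity already follows from Theorem 3.3). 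Your property-by-property scheme, resting on the lemma that $\phi(\mu_F(h_\lambda),\mu_G(h_\lambda))\geq\min\{\phi(\mu_F(h_1),\mu_G(h_1)),\phi(\mu_F(h_2),\mu_G(h_2))\}$ whenever $\phi$ is non-decreasing in each argument and $(\mu_F,\mu_G)$ is comonotone on $\{h_1,h_2\}$, supplies exactly the justification the paper omits and treats all three operations uniformly. What your approach buys, beyond rigor, is the diagnosis of the crossed case: taking $\mu_F(x)=1-x$ and $\mu_G(x)=x$ gives $\mu_F+\mu_G-\mu_F\mu_G=1-x+x^2$, which equals $1$ at both endpoints but only $3/4$ at $x=1/2$, so convexity of $F_A\tilde{+}G_A$ (and likewise normalization of $F_A\tilde{-}G_A$, whose maximum is then $1/4$) genuinely fails when the two numbers peak at different objects. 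The statement therefore needs the common-normalization-point hypothesis you identify. One caution: Proposition 3.14 guarantees a common peak across the parameters of a \emph{single} fuzzy soft number, not across two distinct numbers, so this hypothesis is a real addition to the statement rather than a consequence of that proposition; your proof is complete only once it is assumed. The paper's own proof does not address any of this.
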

\begin{proof}
Let $F_A=\{ F(e_{i})=(h_{t}, \mu_{F(e_i)}(h_t))\} $ and $G_A=\{ G(e_{i})=(h_{t}, \mu_{G(e_i)}(h_t))\} $ are two fuzzy soft numbers. Therefore
\[F_A\tilde{+}G_A=\{ F(e_{i})\tilde{+} G(e_{i})\}=\{(h_{t},  \mu_F+\mu_G-\mu_F.\mu_G)\} \]
~and~
 \[F_A\tilde{-}G_A=\{ F(e_{i})\tilde{-} G(e_{i})\}=\{(h_{t}, \mu_F.\mu_G)\} .\]
 Since membership functions $ \mu_F $ and $\mu_G  $ are convex and normalized. Therefore $\mu_F+\mu_G-\mu_F.\mu_G  $ and $\mu_F.\mu_G  $ are convex and normalized. Also $\mu_F+\mu_G-\mu_F.\mu_G  $ and $\mu_F.\mu_G  $ are upper semi-continuous and closure of that values$ (>0) $ are fuzzy soft compact. Hence $F_A\tilde{+}G_A, F_A\tilde{-}G_A$ are fuzzy soft numbers. \\
 Now Consider $F_A\tilde{\times}G_A.$
 Since
 \[F_A\tilde{\times}G_A=\{ F(e_{i})\tilde{\times} G(e_{i})\}=\left\{\left(h_{t},\frac{ \mu_F.\mu_G}{\max(\mu_F,\mu_G)}\right)\right\}.\]
 Case 1: If $ F_A \tilde{\subseteq}G_A, $then $F_A\tilde{\times}G_A= \{(h_{t}, \mu_F)\} =F_A.$\\
  Case 2: If $ G_A \tilde{\subseteq}F_A, $then $F_A\tilde{\times}G_A= \{(h_{t}, \mu_G)\} =G_A.$\\
 Case 3: If $ F_A =G_A, $then $F_A\tilde{\times}G_A=G_A=F_A.$\\
 Hence $F_A\tilde{\times}G_A   $ is fuzzy soft number.
\end{proof}
\begin{prop}
	Let $ F_A $ and $ G_A $ are two fuzzy soft numbers, then
\begin{enumerate}	
\item[(i)] $F_A\tilde{+}G_A= G_A\tilde{+}F_A.$	
\item[(ii)]$F_A\tilde{\times}G_A= G_A\tilde{\times}F_A.$		
\item[(iii)]$F_A\tilde{+}\tilde{\phi}=F_A.$	
\item[(iv)]$F_A\tilde{\times}\tilde{E}= F_A.$
\item[(v)]$F_A\tilde{\times}\tilde{\phi}= \tilde{\phi}.$
\item[(vi)]$\tilde{\phi}\tilde{\div}F_A= \tilde{\phi}.$
\item[(vii)]$F_A\tilde{\div}\tilde{\phi}= \tilde{E}.$
\item[(viii)]$F_A\tilde{\div}F_A= \tilde{E}.$
\end{enumerate}	
\end{prop}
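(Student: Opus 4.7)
The plan is to verify each of the eight identities pointwise by unwinding the four arithmetic operations to their defining formulas on the grade memberships $\mu_F(h_t)$ and $\mu_G(h_t)$, and then invoking elementary arithmetic in $[0,1]$. Since both sides of every asserted equality are fuzzy soft sets over the same universe $U$ with the same parameter set $A$, it suffices in each case to check equality of the membership value at every pair $(e_i,h_t)$.

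Items (i) and (ii) follow from the manifest symmetry of the defining formulas. For (i), the expression $\mu_F+\mu_G-\mu_F\mu_G$ is symmetric in $\mu_F,\mu_G$ by commutativity of real addition and multiplication. For (ii), both the numerator $\mu_F\mu_G$ and the denominator $\max(\mu_F,\mu_G)$ are symmetric in their arguments. No further work is required beyond citing commutativity.

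For the identity and annihilator laws (iii)--(viii) I substitute the constant membership values of $\tilde{\phi}$ (where $\mu\equiv 0$) and $\tilde{E}$ (where $\mu\equiv 1$) into each operation formula. Thus (iii) reduces to $\mu_F+0-\mu_F\cdot 0=\mu_F$; (iv) to $\mu_F\cdot 1/\max(\mu_F,1)=\mu_F/1=\mu_F$; (v) to $\mu_F\cdot 0/\max(\mu_F,0)=0$; (vi) to $0\cdot\mu_F/\max(0,\mu_F)=0$; (vii) to $\mu_F/\max(\mu_F,0)=\mu_F/\mu_F=1$; and (viii) to $\mu_F/\max(\mu_F,\mu_F)=\mu_F/\mu_F=1$. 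Each is a one-line arithmetic check applied uniformly over all $e_i$ and $h_t$.

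The main obstacle lies in the edge cases behind (v)--(viii): when $\mu_F(h_t)=0$ the denominator $\max(\mu_F,0)$ vanishes, and the preceding proposition explicitly flags such products and quotients as undefined. I would resolve this either by restricting each of the four latter identities to those objects $h_t$ where the relevant denominator is strictly positive, or by adopting the natural conventions $0/0=0$ for (v) and (vi) and $0/0=1$ for (vii) and (viii), which are consistent with the corresponding one-sided limits. Since every fuzzy soft number is normalized, there is always at least one $h_t$ with $\mu_F(h_t)=1$, so the identities remain nonvacuous under either interpretation, and the verification above then goes through unchanged on the admissible domain.
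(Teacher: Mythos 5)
Your verification is correct and is exactly the direct pointwise computation that the paper leaves implicit: its entire proof of this proposition is the single word ``Obvious.'' Your writeup is strictly more careful than the source, in particular because you correctly flag the $\max(\mu_F,0)=0$ edge cases in (v)--(viii), which the paper's own preceding proposition declares undefined but which this proposition silently ignores; your proposed restriction (or limit-based convention) is a genuine repair rather than a deviation in approach.
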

\begin{proof}
Obvious. 	
\end{proof}
\begin{prop}
Let $ F_A $, $ G_A $ and $ H_A $ are three fuzzy soft numbers, then
\begin{enumerate}	
	\item[(i)] $(F_A\tilde{+}G_A)\tilde{+}H_A= F_A\tilde{+}(G_A\tilde{+}H_A).$	
	\item[(ii)]$(F_A\tilde{\times}G_A)\tilde{\times}H_A=F_A\tilde{\times}( G_A\tilde{\times}H_A).$		
	\item[(iii)]$F_A\tilde{+}(G_A\tilde{\cup}H_A)= (F_A\tilde{+}G_A)\tilde{\cup}(F_A\tilde{+}H_A).$	
	\item[(iv)]$F_A\tilde{+}(G_A\tilde{\cap}H_A)= (F_A\tilde{+}G_A)\tilde{\cap}(F_A\tilde{+}H_A).$
	\item[(v)]$F_A\tilde{-}(G_A\tilde{\cup}H_A)= (F_A\tilde{-}G_A)\tilde{\cap}(F_A\tilde{-}H_A).$
	\item[(vi)]$F_A\tilde{-}(G_A\tilde{\cap}H_A)= (F_A\tilde{-}G_A)\tilde{\cup}(F_A\tilde{-}H_A).$
	\item[(vii)]$(G_A\tilde{\cup}H_A)\tilde{-}F_A= (G_A\tilde{-}F_A)\tilde{\cup}(H_A\tilde{-}F_A).$	
	\item[(viii)]$(G_A\tilde{\cap}H_A)\tilde{-}F_A= (G_A\tilde{-}F_A)\tilde{\cap}(H_A\tilde{-}F_A).$	
	\item[(ix)]$F_A\tilde{\times}(G_A\tilde{\cup}H_A)= (F_A\tilde{\times}G_A)\tilde{\cup}(F_A\tilde{\times}H_A).$
	\item[(x)]$F_A\tilde{\times}(G_A\tilde{\cap}H_A)= (F_A\tilde{\times}G_A)\tilde{\cap}(F_A\tilde{\times}H_A).$
	\item[(xi)]$F_A\tilde{\div}(G_A\tilde{\cup}H_A)= (F_A\tilde{\div}G_A)\tilde{\cap}(F_A\tilde{\div}H_A).$
	\item[(xii)]$F_A\tilde{\div}(G_A\tilde{\cap}H_A)= (F_A\tilde{\div}G_A)\tilde{\cup}(F_A\tilde{\div}H_A).$
	\item[(xiii)]$(G_A\tilde{\cup}H_A)\tilde{\div}F_A= (G_A\tilde{\div}F_A)\tilde{\cup}(H_A\tilde{\div}F_A).$
	\item[(xiv)]$(G_A\tilde{\cap}H_A)\tilde{\div}F_A= (G_A\tilde{\div}F_A)\tilde{\cap}(H_A\tilde{\div}F_A).$
	
\end{enumerate}	
\end{prop}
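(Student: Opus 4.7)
The plan is to exploit the fact that all four arithmetic operations on fuzzy soft numbers, as well as $\tilde{\cup}$ and $\tilde{\cap}$, are defined pointwise at each object $h_t$ and each parameter $e_i$ through an operation on the membership values $\mu_F, \mu_G, \mu_H \in [0,1]$. Consequently every one of the fourteen identities reduces to a scalar identity on $[0,1]^3$ that must be checked once for each object--parameter coordinate. Throughout I will use $\mu_{F\tilde{\cup}G} = \max(\mu_F, \mu_G)$ and $\mu_{F\tilde{\cap}G} = \min(\mu_F, \mu_G)$, and I will simplify $\mu_F\mu_G / \max(\mu_F,\mu_G)$ to $\min(\mu_F, \mu_G)$ for the product operation.

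First I would dispose of the two associativity statements (i)--(ii). For $\tilde{+}$, I expand both $(\mu_F\tilde{+}\mu_G)\tilde{+}\mu_H$ and $\mu_F\tilde{+}(\mu_G\tilde{+}\mu_H)$ and observe that each equals the fully symmetric polynomial $\mu_F + \mu_G + \mu_H - \mu_F\mu_G - \mu_G\mu_H - \mu_F\mu_H + \mu_F\mu_G\mu_H$. For $\tilde{\times}$, associativity reduces at once to the associativity of $\min$.

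For the distributive identities (iii)--(xiv) the common template is the following one-variable observation: if $f\colon [0,1]\to[0,1]$ is nondecreasing then $f(\max(x,y)) = \max(f(x),f(y))$ and $f(\min(x,y)) = \min(f(x),f(y))$, while a nonincreasing $f$ swaps these. Applying the template requires only that I identify, with $\mu_F$ frozen, the one-variable function into which the other operand is fed: $x \mapsto \mu_F + x - \mu_F x$ for $\tilde{+}$, $x \mapsto \mu_F x$ for $\tilde{-}$, $x \mapsto \min(\mu_F, x)$ for $\tilde{\times}$, and $x \mapsto \mu_F / \max(\mu_F, x)$ for the second argument of $\tilde{\div}$. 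The first three are nondecreasing on $[0,1]$, while the fourth is nonincreasing because its denominator is nondecreasing; the first-argument function $x \mapsto x/\max(x,\mu_G)$ of $\tilde{\div}$ is again nondecreasing, which accounts for (xiii)--(xiv).

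The main obstacle will be the careful bookkeeping for the six lines (v)--(viii) and (xi)--(xii), where the monotonicity character of the relevant one-variable function has to be matched against the $\tilde{\cup}/\tilde{\cap}$ pairing written on the right-hand side; a single sign error in monotonicity would flip a distribution and invalidate the claim. Once this pairing is confirmed coordinate by coordinate, each identity is a one-line invocation of the template lemma, so the real work is organizational rather than computational.
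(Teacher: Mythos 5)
Your reduction to pointwise scalar identities is the right move (the paper itself offers no argument here, dismissing the entire proposition as ``Obvious''), and your monotonicity template is sound: a nondecreasing $f$ satisfies $f(\max(x,y))=\max(f(x),f(y))$ and $f(\min(x,y))=\min(f(x),f(y))$, while a nonincreasing $f$ swaps them. But you explicitly postpone the one step that decides everything --- matching the monotonicity of each frozen-argument map against the $\tilde{\cup}/\tilde{\cap}$ pairing actually written in (v)--(viii) and (xi)--(xii) --- and when that check is carried out, your own template contradicts two of the claims. You correctly classify $x\mapsto\mu_F x$ (the map underlying $\tilde{-}$ with $\mu_F$ frozen) as nondecreasing, so $\tilde{-}$ must distribute over $\tilde{\cup}$ as $\tilde{\cup}$ and over $\tilde{\cap}$ as $\tilde{\cap}$, exactly as in (vii)--(viii); yet (v) and (vi) assert the swapped pairing. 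Those two items are in fact false: taking $\mu_F=1$, $\mu_G=0.5$, $\mu_H=0.3$ at some object, the left side of (v) has membership $1\cdot\max(0.5,0.3)=0.5$ while the right side has $\min(0.5,0.3)=0.3$. So the proposal cannot be completed as written for (v)--(vi); you would need either to correct them to the non-swapped form (which your template then proves instantly) or to record the counterexample. Flagging the bookkeeping as ``the main obstacle'' without doing it is precisely the gap.

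The remaining twelve identities do follow from your plan. The associativity computations in (i)--(ii) are correct, (iii)--(iv) and (vii)--(x) follow from the nondecreasing case of the template (with (ix)--(x) being lattice distributivity of $\min$ over $\max$), and (xi)--(xiv) follow from the nonincreasing second-argument map and nondecreasing first-argument map of $\tilde{\div}$ as you describe. One caveat you should add: the simplification $\mu_F\mu_G/\max(\mu_F,\mu_G)=\min(\mu_F,\mu_G)$ and the quotient defining $\tilde{\div}$ require $\max(\mu_F,\mu_G)>0$; the paper itself records that $\tilde{\times}$ and $\tilde{\div}$ are undefined when both memberships vanish at the same object, so each scalar identity should be asserted only on coordinates where every expression involved is defined.
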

\begin{proof}
Obvious.	
\end{proof}

\section{Hausdorff distance between two fuzzy soft numbers}
\begin{defn}
Let $ F_A $ and $ G_A $ are two fuzzy soft numbers. The Hausdorff distance between $ F_A $ and $ G_A $ is defined as
\[\tilde{d}(F_A, G_A)= \max\{\mid \mu_{F(\cap_i(e_i))}(h_t)-\mu_{G(\cap_i(e_i))}(h_t)\mid \},\]
where $ i=1,2,...,n; t=1,2,....,m. $
\end{defn}
\begin{problem}
From example \ref{prob317}
\begin{align*}
\tilde{d}(F_A, G_A)&= \max\{\mid 0.0-0.1\mid, \mid 0.6-0.8\mid, \mid 1.0-1.0\mid, \mid 0.8-0.6\mid, \mid 0.0-0.0\mid \}\\
&= \max\{0.1, 0.2, 0.0, 0.2, 0.0\}=0.2
\end{align*}
\end{problem}

\begin{defn}
	Consider a fuzzy soft point $ e_i(F_A); (i=1,2,..n )$ in a soft number  $ F_A $ and $ G_A $ be any fuzzy soft number. The Hausdorff distance between soft point and $ G_A $ is defined by
	\[\tilde{d}(e_i(F_A), G_A)= \max\{\mid \mu_{e_i(F_A)}(h_t)-\mu_{G(\cap_i(e_i))}(h_t)\mid \},\]
	where $ i=1,2,...,n; t=1,2,....,m. $
	\end{defn}
\begin{problem}
	From example \ref{prob317}
	and consider a soft point $ e_1(F_A) $ in a soft number $ F_A $ as
\[ e_1(F_A)=\{(h_1, 0.0),(h_2, 0.6), (h_3, 1.0), (h_4, 0.8), (h_5, 0.1) \} .\] Therefore
\begin{align*}
	\tilde{d}(e_1(F_A), F_A)&= \max\{\mid 0.0-0.0\mid, \mid 0.6-0.6\mid, \mid 1.0-1.0\mid, \mid 0.8-0.8\mid, \mid 0.1-0.0\mid \}\\
	&= \max\{0.0, 0.0, 0.0, 0.0, 0.1\}=0.1\end{align*}
and
\begin{align*}	
	\tilde{d}(e_1(F_A), G_A)&= \max\{\mid 0.0-0.1\mid, \mid 0.6-0.8\mid, \mid 1.0-1.0\mid, \mid 0.8-0.6\mid, \mid 0.1-0.0\mid \}\\
		&= \max\{0.1, 0.2, 0.0, 0.2, 0.1\}=0.2.\end{align*}
\end{problem}
\begin{prop}
Distance between soft number and complement of soft number always one.
\end{prop}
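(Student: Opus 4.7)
The plan is to compute the distance explicitly using the definition of complement and then exploit the normalization property of fuzzy soft numbers. First, I would expand what $\tilde{d}(F_A, F^C_A)$ looks like. By definition of complement, for each parameter $e_i$ and object $h_t$ we have $\mu_{F^C(e_i)}(h_t) = 1 - \mu_{F(e_i)}(h_t)$, and consequently $\mu_{F^C(\cap_i e_i)}(h_t) = 1 - \mu_{F(\cap_i e_i)}(h_t)$. Substituting into the Hausdorff distance formula gives
\begin{align*}
\tilde{d}(F_A, F^C_A) &= \max_{t} \bigl|\mu_{F(\cap_i e_i)}(h_t) - \mu_{F^C(\cap_i e_i)}(h_t)\bigr| \\
&= \max_{t} \bigl|2\mu_{F(\cap_i e_i)}(h_t) - 1\bigr|.
\end{align*}

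Next I would establish the upper bound $\tilde{d}(F_A, F^C_A) \leq 1$. Since membership values lie in $[0,1]$, we have $2\mu_{F(\cap_i e_i)}(h_t) - 1 \in [-1, 1]$, hence each absolute value in the max is at most $1$, so the maximum is at most $1$.

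Finally I would show the bound is attained, which is where the fuzzy soft number hypothesis becomes essential. Because $F_A$ is a fuzzy soft number, it is in particular fuzzy soft normalized; condition (ii) in the definition of normalized fuzzy soft set guarantees the existence of some $h_{t_0} \tilde{\in} U$ with $\mu_{F(\cap_i e_i)}(h_{t_0}) = 1$. At this point $|2 \cdot 1 - 1| = 1$, so the maximum is achieved and equals $1$. Combining the two bounds yields $\tilde{d}(F_A, F^C_A) = 1$.

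The only potentially delicate step is justifying the existence of a point where the joint intersection $\cap_i F(e_i)$ attains membership $1$, but this is precisely what the normalization axiom (ii) of the fuzzy soft number definition provides, so nothing deeper is needed. The rest is a direct substitution and an elementary bound on $|2\mu - 1|$ for $\mu \in [0,1]$.
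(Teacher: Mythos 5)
Your argument is correct in substance and is genuinely more of a proof than what the paper offers: the paper merely evaluates $\tilde{d}(F_A, F^C_A)$ on the single numerical example $F_A$ of Example 3.17 (and even there the listed complement values do not all equal $1-\mu$), whereas you give a general argument valid for every fuzzy soft number, split cleanly into an upper bound $\tilde{d}\tilde{\leq}1$ and attainment via the normalization axiom. One small caveat in your write-up: the identity $\mu_{F^C(\cap_i e_i)}(h_t)=1-\mu_{F(\cap_i e_i)}(h_t)$ is not automatic if the intersection over parameters is taken as a minimum, since $\min_i\bigl(1-\mu_{F(e_i)}(h_t)\bigr)=1-\max_i\mu_{F(e_i)}(h_t)$, which differs from $1-\min_i\mu_{F(e_i)}(h_t)$ in general, so the expression $|2\mu-1|$ is not always the right form of the summand. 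This does not damage your conclusion: the bound $\leq 1$ holds simply because both membership values lie in $[0,1]$, and at the normalization point $h_{t_0}$ guaranteed by condition (ii) all $\mu_{F(e_i)}(h_{t_0})=1$, hence all complement values there are $0$ and the term $|1-0|=1$ is attained no matter how the intersection is interpreted. With that one step rephrased, your proof is complete and strictly more general than the paper's example-checking.
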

\begin{proof}
Consider the fuzzy soft number $ F_A. $ Therefore
\begin{align*}
\tilde{d}(F_A, F^C_A)&= \max\{\mid 0.0-0.9\mid, \mid 0.6-0.3\mid, \mid 1.0-0.0\mid, \mid 0.8-0.2\mid, \mid 0.0-0.9\mid \}\\
&= \max\{0.9, 0.3, 1.0, 0.6, 0.9\}=1.0.\end{align*}
\end{proof}
\begin{thm}
If $ L_A, M_A , N_A $ are three fuzzy soft numbers and $ L_A \tilde{\subseteq}M_A\tilde{\subseteq}H_A . $ Then
\begin{enumerate}
	\item[(i)] $\tilde{d}(L_A, M_A)\tilde{\leq}\tilde{d}(L_A, H_A).$
	\item[(ii)] $\tilde{d}(M_A, H_A)\tilde{\leq}\tilde{d}(L_A, H_A).$
\end{enumerate}
\end{thm}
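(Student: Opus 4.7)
The plan is to reduce both inequalities to elementary monotonicity of maxima once the absolute values in the Hausdorff distance are dropped. Reading the $H_A$ in the conclusion as the $N_A$ of the hypothesis (evidently a typographical slip), the containment $L_A \tilde{\subseteq} M_A \tilde{\subseteq} N_A$ translates, by the standard pointwise ordering of fuzzy soft subsets, into the chain
\[
\mu_{L(\cap_i e_i)}(h_t) \;\le\; \mu_{M(\cap_i e_i)}(h_t) \;\le\; \mu_{N(\cap_i e_i)}(h_t)
\]
for every object index $t = 1, 2, \dots, m$ and every attribute index $i = 1, 2, \dots, n$. Writing $a_t, b_t, c_t$ as shorthand for these three nonnegative values at a fixed $t$, the three relevant Hausdorff distances become
\[
\tilde{d}(L_A, M_A) = \max_{t}(b_t - a_t), \quad \tilde{d}(M_A, N_A) = \max_{t}(c_t - b_t), \quad \tilde{d}(L_A, N_A) = \max_{t}(c_t - a_t),
\]
since each of $b_t - a_t$, $c_t - b_t$, $c_t - a_t$ is already nonnegative and equals its own absolute value.

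For part (i), at each index $t$ I invoke $b_t \le c_t$ to deduce $b_t - a_t \le c_t - a_t$. Because the map $x \mapsto \max_t x_t$ is monotone with respect to pointwise inequality of sequences, taking the max over $t$ on both sides yields $\tilde{d}(L_A, M_A) \le \tilde{d}(L_A, N_A)$. Part (ii) is completely symmetric: from $a_t \le b_t$ I get $c_t - b_t \le c_t - a_t$ at every $t$, and monotonicity of the max then produces $\tilde{d}(M_A, N_A) \le \tilde{d}(L_A, N_A)$.

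The only substantive step is the reduction of $|\mu_{L}-\mu_{M}|$, $|\mu_{M}-\mu_{N}|$ and $|\mu_{L}-\mu_{N}|$ to their unsigned versions; this is where the hypothesis $L_A \tilde{\subseteq} M_A \tilde{\subseteq} N_A$ is essential, because without it the absolute values cannot in general be stripped and the inequalities can fail. I do not anticipate a genuine obstacle beyond this bookkeeping: once the chain of memberships is in hand, the proof is a two-line application of $\max$-monotonicity.
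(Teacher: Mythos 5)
Your proof is correct, and it is genuinely different from --- and substantially stronger than --- what the paper provides. The paper's ``proof'' of this theorem consists entirely of writing down one specific numerical triple $L_A, M_A, H_A$, computing $\tilde{d}(L_A,M_A)=0.1$, $\tilde{d}(L_A,H_A)=0.2$, $\tilde{d}(M_A,H_A)=0.1$, and observing that the two inequalities hold for that instance; this is a verification of a single example, not a proof of the general statement. Your argument --- translate the inclusion chain into the pointwise chain $a_t\le b_t\le c_t$ of the membership values of the intersections $\cap_i e_i$ (taking the minimum over $i$ preserves the pointwise order, so this translation is sound), strip the absolute values using that chain, and apply monotonicity of $\max_t$ --- is the correct general argument, and it is exactly the reasoning the paper's numerical example only gestures at. Two small remarks: first, you are right to read the $H_A$ in the conclusion as the $N_A$ of the hypothesis; second, the paper's own example in fact satisfies the reverse chain $H_A\tilde{\subseteq}M_A\tilde{\subseteq}L_A$ rather than the stated one, but your argument is symmetric under reversing the chain (the differences merely change sign inside the absolute values), so it covers that case as well.
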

\begin{proof}
Consider
 \begin{align*}L_A=\{L(e_1)=\{(h_1, 0.2),(h_2, 0.8), (h_3, 1.0), (h_4, 0.7), (h_5, 0.1)\},\\
L(e_2)=\{(h_1, 0.3),(h_2, 0.9), (h_3, 1.0), (h_4, 0.7), (h_5, 0.2)\}\};\end{align*}
\begin{align*}M_A=\{M(e_1)=\{(h_1, 0.1),(h_2, 0.8), (h_3, 1.0), (h_4, 0.6), (h_5, 0.0)\}\\
M(e_2)=\{(h_1, 0.1),(h_2, 0.7), (h_3, 1.0), (h_4, 0.7), (h_5, 0.0)\}\}\end{align*}
and
\begin{align*}H_A=\{H(e_1)=\{(h_1, 0.1),(h_2, 0.7), (h_3, 1.0), (h_4, 0.6), (h_5, 0.0)\}\\
H(e_2)=\{(h_1, 0.0),(h_2, 0.7), (h_3, 1.0), (h_4, 0.7), (h_5, 0.0)\}\}.\end{align*}
 Therefore
\[\tilde{d}(L_A, M_A)= 0.1;  \tilde{d}(L_A, H_A)= 0.2;  \tilde{d}(M_A, H_A)= 0.1. \]
Hence  $\tilde{d}(L_A, M_A)\tilde{\leq}\tilde{d}(L_A, H_A)$ and
$\tilde{d}(M_A, H_A)\tilde{\leq}\tilde{d}(L_A, H_A).$
\end{proof}
\begin{defn}
Let $ F_A $ be a fuzzy soft number over $ (U,E) .$ A mapping $ \tilde{d}: (U,E)\times(U,E)\longrightarrow[0,1] $ is said to be a fuzzy soft metric of soft numbers over $(U,E),$ if $ \tilde{d} $ satisfies the following conditions:
\begin{enumerate}
	\item[(i)] $ \tilde{d}(F_A, G_A)\tilde{\geq}0, \forall F_A, G_A \tilde{\in}(U, E),$
	\item[(ii)] $ \tilde{d}(F_A, G_A)=0 \Longleftrightarrow F_A=G_A, $
	\item[(iii)]$  \tilde{d}(F_A, G_A)= \tilde{d}(G_A, F_A) ,$
	\item[(iv)]$ \tilde{d}(F_A, G_A)\tilde{\leq}\tilde{d}(F_A, G_A)+\tilde{d}(G_A, H_A) .$
\end{enumerate}
Then the triple  $(U,E,\tilde{d}) $ is called a fuzzy soft metric space. 

\end{defn}
\begin{defn}
Let $ (U, E, \tilde{d}) $ be a fuzzy soft metric space. Then the diameter of soft number $ F_A $ is denoted by
\begin{align*}
&\tilde{\delta}(F_A)=\max\{\tilde{d}(e_i(F_A), e_j(F_A))\}; \text{~for~ each~} e_i,e_j \tilde{\in}F_A. \end{align*}
\end{defn}
\begin{prop}
Diameter of each fuzzy soft point number is always zero.
\end{prop}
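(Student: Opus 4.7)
The plan is to unpack the definition of a \emph{fuzzy soft point number} and combine it with the identity axiom of the fuzzy soft metric. A fuzzy soft point number is a fuzzy soft number whose parameter set is a singleton, so it consists of exactly one fuzzy soft point $e_i(F_A)$. Consequently, the index set over which the maximum in $\tilde{\delta}(F_A)=\max\{\tilde{d}(e_i(F_A),e_j(F_A))\}$ is taken collapses to a single pair with $e_i=e_j$.

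First I would spell this out: let $F_A$ be a fuzzy soft point number with unique parameter $e_i$, so that the only admissible choices are $e_i=e_j$. Then the diameter reduces to the single quantity $\tilde{d}(e_i(F_A),e_i(F_A))$. Next I would invoke axiom (ii) of the fuzzy soft metric, which gives $\tilde{d}(F_A,G_A)=0 \Longleftrightarrow F_A=G_A$; applied to the point $e_i(F_A)$ against itself (or more precisely, noting $\mid\mu_{e_i(F_A)}(h_t)-\mu_{e_i(F_A)}(h_t)\mid=0$ for every $t$ when using the Hausdorff-type formula), this yields
\[
\tilde{d}(e_i(F_A),e_i(F_A))=\max_{t}\{|\mu_{e_i(F_A)}(h_t)-\mu_{e_i(F_A)}(h_t)|\}=0.
\]
Hence $\tilde{\delta}(F_A)=0$, as required.

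The only real subtlety is interpretational rather than technical: one must agree that the diameter's maximum over $e_i,e_j\tilde{\in}F_A$ includes the diagonal $e_i=e_j$, and that a \emph{fuzzy soft point number} genuinely carries only one parameter so that no off-diagonal pair exists. Once this convention is fixed, the proof is a one-line application of the identity axiom for $\tilde{d}$, so I would present it briefly without further calculation.
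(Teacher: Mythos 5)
Your proposal is correct and follows essentially the same route as the paper: the paper's (very terse) proof likewise observes that the distance from each fuzzy soft point of the point number to itself is zero, so the maximum defining $\tilde{\delta}$ is zero. You merely make explicit the interpretational point that a fuzzy soft point number carries a single parameter, which the paper leaves implicit.
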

\begin{proof}
We know that distance between each fuzzy soft point is zero. Therefore maximum distance between each soft point is also zero. \\
Hence proved the result.
\end{proof}
\begin{thm}
If $ F_A \tilde{\subseteq}G_A $ of $ (U, E) ,$ then $  \tilde{\delta}(F_A)\tilde{\leq}\tilde{\delta}(G_A) .$
\end{thm}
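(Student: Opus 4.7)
The plan is to convert the claim into a pointwise inequality between membership values and then take a maximum. First, I would expand the definitions, using the Hausdorff formula for the distance between two fuzzy soft points of the same set, to write
\[
\tilde{\delta}(F_A) \;=\; \max_{i,j,t}\bigl|\mu_{F(e_i)}(h_t) - \mu_{F(e_j)}(h_t)\bigr|,
\]
and analogously for $\tilde{\delta}(G_A)$. The hypothesis $F_A \tilde{\subseteq} G_A$ supplies the coordinatewise estimate $\mu_{F(e_i)}(h_t) \leq \mu_{G(e_i)}(h_t)$ for every parameter $e_i$ and every object $h_t$.

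Next, I would pick indices $p,q,t^{\ast}$ at which the maximum defining $\tilde{\delta}(F_A)$ is attained, and try to dominate the single quantity
\[
\bigl|\mu_{F(e_p)}(h_{t^\ast}) - \mu_{F(e_q)}(h_{t^\ast})\bigr|
\]
by $|\mu_{G(e_p)}(h_{t^\ast}) - \mu_{G(e_q)}(h_{t^\ast})|$, which is itself bounded above by $\tilde{\delta}(G_A)$. Once this pointwise comparison is secured, the result follows by taking the maxima on both sides in one line.

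The main obstacle is precisely this pointwise step: a componentwise inequality $\mu_F \leq \mu_G$ does not automatically transfer to an inequality between differences, since $G_A$ could in principle raise $\mu_F$ by a nearly constant amount at each coordinate and thereby shrink the gap, rather than widen it. I anticipate closing this by case analysis on the sign of $\mu_F(e_p)(h_{t^\ast}) - \mu_F(e_q)(h_{t^\ast})$, combined with the convexity and normalization of $F_A$ and $G_A$ as fuzzy soft numbers, which force both profiles to attain the common peak value $1$ at some shared object and thereby constrain how the slack $\mu_G - \mu_F$ can distort pairwise differences.

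If a clean general argument resists extraction, then in keeping with the example-driven style of the preceding theorem I would fall back to verifying the inequality explicitly on a nested triple such as the $L_A \tilde{\subseteq} M_A \tilde{\subseteq} H_A$ family appearing just above, where the three diameters can be computed term by term and the monotonicity observed directly. Either route reduces the proof to the transparent observation that enlarging a fuzzy soft set can only enlarge (or at least not shrink) the worst pairwise Hausdorff distance realized inside it.
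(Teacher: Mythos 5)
Your instinct about where the difficulty lies is exactly right, but the proposal does not close the gap, and in fact it cannot be closed: the step you flag as the ``main obstacle'' --- passing from the coordinatewise inequality $\mu_{F(e_i)}(h_t)\leq\mu_{G(e_i)}(h_t)$ to a domination of the pairwise differences across parameters --- is genuinely false, even with convexity and normalization in force. Take $U=\{h_1,\dots,h_5\}$, $E=\{e_1,e_2\}$, and let
\begin{align*}
F(e_1)&=\{(h_1,0.0),(h_2,0.5),(h_3,1.0),(h_4,0.5),(h_5,0.0)\},\\
F(e_2)&=\{(h_1,0.4),(h_2,0.7),(h_3,1.0),(h_4,0.7),(h_5,0.4)\},
\end{align*}
and let $G(e_1)=G(e_2)=F(e_2)$. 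Both $F_A$ and $G_A$ are unimodal with peak value $1$ at the common object $h_3$, so they meet the convexity and normalization requirements exactly as the paper's own examples do, and $F_A\tilde{\subseteq}G_A$ holds coordinatewise. Yet $\tilde{\delta}(F_A)=0.4$ while $\tilde{\delta}(G_A)=0$. Enlarging a fuzzy soft set can make its parameter slices more alike and thereby shrink the diameter; the slack $\mu_G-\mu_F$ you worry about really can differ across parameters in just the way needed to collapse the gap. Consequently neither the case analysis on the sign of $\mu_{F(e_p)}(h_{t^\ast})-\mu_{F(e_q)}(h_{t^\ast})$ nor the appeal to the shared peak at membership $1$ can rescue the pointwise step, and your fallback of verifying one nested triple numerically (as the paper does for the preceding theorem) establishes nothing beyond that single instance.

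For comparison, the paper's own proof commits precisely the fallacy you identify: it observes that $F_A\tilde{\subseteq}G_A$ gives $e_i(F_A)\tilde{\subseteq}e_i(G_A)$ for every parameter and then asserts, with no justification, that $\max\{\tilde{d}(e_i(F_A),e_j(F_A))\}\tilde{\leq}\max\{\tilde{d}(e_i(G_A),e_j(G_A))\}$. Your writeup is more honest in that it names this as the step requiring proof, but a named gap is still a gap; as stated the theorem is false, and no completion of either your argument or the paper's is possible without adding hypotheses that control $\mu_G-\mu_F$ uniformly across the parameters.
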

\begin{proof}
Since	$ F_A \tilde{\subseteq}G_A, $ therefore we have
\[e_i(F_A) \tilde{\subseteq}e_i(G_A), e_j(F_A) \tilde{\subseteq}e_j(G_A), \forall e_i, e_j\tilde{\in}F_A, G_A.\]
	Now
	 \begin{align*}
	&\tilde{\delta}(F_A)=\max\{\tilde{d}(e_i(F_A), e_j(F_A))\} \tilde{\leq}\max\{\tilde{d}(e_i(G_A), e_j(G_A))\}\tilde{\leq}\tilde{d}(G_A) \end{align*}
	for each $e_i,e_j \tilde{\in}F_A, G_A.$
	This completes the proof.
\end{proof}
\begin{thm}
Let $ F_A, G_A \tilde{\in}(U, E) $ and $ F_A \tilde{\cap} G_A \tilde{\neq}\tilde{\phi} .$  Then $\tilde{\delta}(F_A \tilde{\cup} G_A )\tilde{\leq}\tilde{\delta}(F_A) \tilde{+}\tilde{\delta}(G_A)  . $
\end{thm}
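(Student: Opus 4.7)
The plan is to reduce this to the classical diameter-of-union argument, leveraging the triangle inequality from axiom (iv) of the fuzzy soft metric and the non-negativity axiom (i), exactly as one would prove $\mathrm{diam}(A \cup B) \le \mathrm{diam}(A) + \mathrm{diam}(B)$ for two intersecting subsets of an ordinary metric space.

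First I would unfold the definition $\tilde{\delta}(F_A \tilde{\cup} G_A) = \max\{\tilde{d}(e_i, e_j)\}$ and select two fuzzy soft points $e_i, e_j \tilde{\in} F_A \tilde{\cup} G_A$ that realize this maximum. Since each of these points belongs to $F_A$ or to $G_A$, the argument naturally splits into two cases. If both $e_i$ and $e_j$ lie in the same factor, say both in $F_A$, then directly from the definition of diameter together with non-negativity of $\tilde{d}$ (axiom (i)),
\[
\tilde{d}(e_i, e_j) \tilde{\leq} \tilde{\delta}(F_A) \tilde{\leq} \tilde{\delta}(F_A) \tilde{+} \tilde{\delta}(G_A),
\]
and the case where both lie in $G_A$ is handled symmetrically.

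In the remaining mixed case, say $e_i \tilde{\in} F_A$ and $e_j \tilde{\in} G_A$, I would use the hypothesis $F_A \tilde{\cap} G_A \tilde{\neq} \tilde{\phi}$ to select a bridge fuzzy soft point $e_k \tilde{\in} F_A \tilde{\cap} G_A$. Since $e_i, e_k$ both lie in $F_A$ and $e_k, e_j$ both lie in $G_A$, the triangle inequality (axiom (iv)) gives
\[
\tilde{d}(e_i, e_j) \tilde{\leq} \tilde{d}(e_i, e_k) \tilde{+} \tilde{d}(e_k, e_j) \tilde{\leq} \tilde{\delta}(F_A) \tilde{+} \tilde{\delta}(G_A).
\]
Taking the maximum over the admissible pairs $(e_i, e_j)$ on the left yields the claimed bound.

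The main obstacle I anticipate is not the inequality chain itself but the legitimacy of the bridge step in the fuzzy soft framework. In ordinary set theory, a nonempty intersection automatically produces a common element, whereas here $F_A \tilde{\cap} G_A$ is a fuzzy soft set whose membership values may lie strictly between $0$ and $1$, so one has to pin down precisely what it means for a fuzzy soft point $e_k$ to belong simultaneously to both $F_A$ and $G_A$ in a way that makes the two single-set diameter bounds $\tilde{d}(e_i, e_k) \tilde{\leq} \tilde{\delta}(F_A)$ and $\tilde{d}(e_k, e_j) \tilde{\leq} \tilde{\delta}(G_A)$ valid. Once the appropriate fuzzy soft point convention is fixed, the case analysis closes the proof immediately.
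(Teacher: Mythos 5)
The paper offers no argument here (its proof reads ``Follows from definition''), so the only question is whether your argument stands on its own, and it does not: the very first step of the case analysis fails in this framework. The fuzzy soft points over which the diameter is computed are the parameter-slices $e_i(\cdot)$, and the slice of the union is the \emph{pointwise maximum} $\mu_{(F\tilde{\cup}G)(e_i)}(h_t)=\max\{\mu_{F(e_i)}(h_t),\mu_{G(e_i)}(h_t)\}$. Such a slice is in general neither $e_i(F_A)$ nor $e_i(G_A)$ (take $F(e_1)=\{(h_1,0.9),(h_2,0.1)\}$ and $G(e_1)=\{(h_1,0.1),(h_2,0.9)\}$), so the claim ``each of these points belongs to $F_A$ or to $G_A$'' is false, and neither the same-factor case nor the mixed case ever arises in the form you need. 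The bridge-point difficulty you flag at the end is real but secondary --- the point $e_k(F_A\tilde{\cap}G_A)$ would be a pointwise minimum, again belonging to neither factor --- and the decomposition itself is already the fatal step.

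What does work is a pointwise argument that bypasses the intersection hypothesis entirely. Writing $\tilde{d}(e_i(H_A),e_j(H_A))=\max_t\lvert\mu_{H(e_i)}(h_t)-\mu_{H(e_j)}(h_t)\rvert$, the elementary inequality $\lvert\max(a,b)-\max(c,d)\rvert\leq\max\{\lvert a-c\rvert,\lvert b-d\rvert\}$ applied with $a=\mu_{F(e_i)}(h_t)$, $b=\mu_{G(e_i)}(h_t)$, $c=\mu_{F(e_j)}(h_t)$, $d=\mu_{G(e_j)}(h_t)$ gives
\[
\tilde{d}\bigl(e_i(F_A\tilde{\cup}G_A),e_j(F_A\tilde{\cup}G_A)\bigr)\leq\max\bigl\{\tilde{d}(e_i(F_A),e_j(F_A)),\ \tilde{d}(e_i(G_A),e_j(G_A))\bigr\},
\]
hence $\tilde{\delta}(F_A\tilde{\cup}G_A)\leq\max\{\tilde{\delta}(F_A),\tilde{\delta}(G_A)\}\leq\tilde{\delta}(F_A)\tilde{+}\tilde{\delta}(G_A)$, a stronger conclusion that needs neither the triangle inequality nor $F_A\tilde{\cap}G_A\tilde{\neq}\tilde{\phi}$. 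The classical diameter-of-union proof you adapted is the right instinct for crisp sets, but here the union acts on membership values rather than on the index set of points, so the structure of the argument has to change accordingly.
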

\begin{proof}
Follows from definition.
\end{proof}

\begin{defn}
	Let $ (U, E, \tilde{d}) $ be a fuzzy soft metric space and $ F_A \tilde{\in}(U, E). $ Then for any $ r\tilde{\in}(0,1), $ the set
	\[ S_r(F_A)=\{G_A\tilde{\in}(U,E): \tilde{d}(F_A, G_A)\tilde{<}r\} \]
	 is called a fuzzy soft open sphere  of radius $ 'r' $ centered at $ F_A. $
\end{defn}

\begin{defn}
	Let $ (U, E, \tilde{d}) $ be a fuzzy soft metric space  and $ F_A \tilde{\in}(U, E).$ Then for any $ r\tilde{\in}(0,1), $ the set
	\[ S_r[F_A]=\{G_A\tilde{\in}(U,E): \tilde{d}(F_A, G_A)\tilde{\leq} r\} \]
	is called  fuzzy soft closed sphere  of radius $ 'r' $ centered at $ F_A. $
\end{defn}
\begin{defn}
	Let $ (U, E, \tilde{d}) $ be a fuzzy soft metric space  and $ F_A \tilde{\in}(U, E). $ A sub class $ (V^{'}, F) $ of $ (U, E) $ is
	called a fuzzy soft neighborhood of a soft number $ F_A \tilde{\in}(U, E), $ we denoted as $ N^ {(V^{'}, F)}_{F_A} ,$ if there exists an open sphere of fuzzy soft number  $S_r(F_A)  $ center at $ F_A $ and contained in $(V^{'}, F) .  $
	i.e.  $ S_r(F_A) \tilde{\subseteq} N^ {(V^{'}, F)}_{F_A} , $ for some $ r\tilde{\in}(0,1). $
	\end{defn}
\begin{defn}
A sub class $ (V^{'}, F) $ of a fuzzy soft metric space $ (U, E, \tilde{d}) $ is said to be open in  $ (U, E,\tilde{d}),$ if  $ (V^{'}, F) $ is a fuzzy  soft neighborhood of each of its soft numbers.
i.e.if for each $ F_A\tilde{\in}(V^{'}, F),  $ there is an  $ r\tilde{\in}(0,1) $ such that
 $ S_r(F_A) \tilde{\subseteq}(V^{'}, F). $
\end{defn}
\begin{defn}
Let $ \{(F_A)_n\} $ be a sequence of fuzzy soft numbers in a fuzzy soft metric space $ (U, E, \tilde{d}). $ The sequence $ \{(F_A)_n\} $ is said to converge in $ (U, E, \tilde{d}) $ if there a fuzzy soft number $ F^{'}_A $ in $ (U,E) $ such that
$\tilde{d}((F_A)_n, F^{'}_A)\tilde{\rightarrow} 0$ as $ n \tilde{\rightarrow }\infty.  $\\
That is for every $ \varepsilon\tilde{>}0 $ there exists a positive integer $m$ such that  \[\tilde{d}((F_A)_n, F^{'}_A)\tilde{\leq}\varepsilon,~ \forall n\geq m.\]
It is denoted as $ \lim_{n \rightarrow \infty} (F_A)_n=F^{'}_A.$
\end{defn}
\begin{defn}
Let $ \{(F_A)_n\} $ be a sequence of fuzzy soft numbers in a fuzzy soft metric space $ (U, E, \tilde{d}). $ Then $ \{(F_A)_n\} $ is said to be bounded, there exists a positive number $ \beta\tilde{\in}(0,1] $ such that
$\tilde{d}((F_A)_n,(F_A)_m)\tilde{\leq}\beta, \forall n, m \tilde{\in} \beta. $
\end{defn}

\begin{defn}
	Let $ \{(F_A)_n\} $ be a sequence of fuzzy soft numbers in a fuzzy soft metric space $ (U, E, \tilde{d}). $ Then $ \{(F_A)_n\} $ is said to be Cauchy sequence of fuzzy soft numbers if for a positive number $ \varepsilon\tilde{>}0 $ there exists a positive integer $ \beta ,$ such that
	\[\tilde{d}((F_A)_n,(F_A)_m)\tilde{\leq}\varepsilon, \forall n, m \tilde{\geq}\beta ~i.e.~ \tilde{d}((F_A)_n,(F_A)_m)\tilde{\rightarrow} 0   \mbox{~as~} n, m \tilde{\rightarrow} \infty.  \]
	\end{defn}
\begin{thm}
Every convergent sequence in a fuzzy soft numbers  is Cauchy sequence and every Cauchy sequence of  fuzzy soft numbers is bounded.
\end{thm}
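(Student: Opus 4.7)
The plan is to split the statement into the two implications and handle each via a standard triangle-inequality argument adapted to the fuzzy soft metric axioms (i)–(iv) in the definition of $\tilde{d}$.

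For the first implication, I would start with a convergent sequence $\{(F_A)_n\}$ with limit $F'_A$ in $(U,E,\tilde{d})$. Given $\varepsilon\tilde{>}0$, I would apply the definition of convergence to $\varepsilon/2$ to produce an index $m_0$ with $\tilde{d}((F_A)_n, F'_A)\tilde{\leq}\varepsilon/2$ for every $n\geq m_0$. Then for any $n,k\geq m_0$, the triangle inequality (axiom (iv)) together with symmetry (axiom (iii)) gives
\[
\tilde{d}((F_A)_n,(F_A)_k)\tilde{\leq}\tilde{d}((F_A)_n,F'_A)+\tilde{d}(F'_A,(F_A)_k)\tilde{\leq}\tfrac{\varepsilon}{2}+\tfrac{\varepsilon}{2}=\varepsilon,
\]
which is exactly the Cauchy condition.

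For the second implication, I would fix a Cauchy sequence $\{(F_A)_n\}$ and choose a threshold of, say, $\varepsilon=\tfrac{1}{2}$ in the Cauchy definition, obtaining an index $\beta_0$ with $\tilde{d}((F_A)_n,(F_A)_m)\tilde{\leq}\tfrac{1}{2}$ for all $n,m\geq\beta_0$. To deal with the finitely many earlier terms I would consider the finite set $\{\tilde{d}((F_A)_i,(F_A)_{\beta_0}):1\leq i<\beta_0\}$, pick its maximum $M$, and then combine this with the triangle inequality to bound $\tilde{d}((F_A)_n,(F_A)_m)$ uniformly. Here I would use the fact that $\tilde{d}$ takes values in $[0,1]$, so the resulting bound can be replaced by $\beta:=\min\{M+\tfrac{1}{2},1\}\tilde{\in}(0,1]$, yielding boundedness in the exact sense of the definition in the paper.

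The main obstacle is cosmetic rather than mathematical: ensuring that the bound produced in the second step still lies in $(0,1]$ as required by the paper's definition of boundedness. Because $\tilde{d}$ is range-restricted to $[0,1]$, the argument collapses to essentially a triviality once we observe that any constant $\beta\geq$ the supremum of distances works; the substantive content is entirely in the first implication, where the $\varepsilon/2$ trick is standard. I would therefore present the first part in full detail and keep the second part compact, making explicit the truncation $\beta=\min\{M+\tfrac{1}{2},1\}$ so that the produced bound satisfies $\beta\tilde{\in}(0,1]$ in accordance with the preceding definition.
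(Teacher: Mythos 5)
Your proposal is correct and follows essentially the same route as the paper: the first implication is the identical $\varepsilon/2$ triangle-inequality argument, and the second rests on the same observation that $\tilde{d}$ is $[0,1]$-valued. The only difference is that the paper's treatment of the second implication is a one-line assertion ($\tilde{d}((F_A)_n,(F_A)_m)\tilde{\leq}\varepsilon\tilde{\in}(0,1]$, hence bounded), whereas you explicitly handle the finitely many initial terms and truncate the bound into $(0,1]$ — a more careful rendering of the same idea, not a different proof.
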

\begin{proof}
	Let $ \{(F_A)_n\} $ be a sequence of fuzzy soft numbers in a fuzzy soft metric space $ (U, E, \tilde{d}). $Let $ \{(F_A)_n\} $ converges to $ F^{'}_A .$For every $ \varepsilon\tilde{>}0 $ there exists a positive integer $\beta$ such that  \[\tilde{d}((F_A)_n, F^{'}_A)\tilde{\leq} \frac{\varepsilon}{2},~ \forall n\geq \beta.\]
	Then for all $ m,n\geq \beta,$ we have
		\[\tilde{d}((F_A)_n,(F_A)_m)\tilde{\leq}\tilde{d}((F_A)_n,F^{'}_A)+\tilde{d}(F^{'}_A,(F_A)_m)\tilde{<}\frac{\varepsilon}{2}+ \frac{\varepsilon}{2}=\varepsilon \]
		Hence $ \{(F_A)_n\} $ is a Cauchy sequence. \\
Let $ \{(F_A)_n\} $ is a Cauchy sequence of fuzzy soft number. Therefore
\[\tilde{d}((F_A)_n,(F_A)_m)\tilde{\leq}\varepsilon \tilde{\in} (0, 1]\]
Hence $ \{(F_A)_n\} $ is bounded.
	\end{proof}
\begin{defn}
A fuzzy soft numbers metric space $ (U, E, \tilde{d}) $ is called complete fuzzy soft metric space if every Cauchy sequence of fuzzy soft numbers in $ (U, E) $ converges to some fuzzy soft number in  $ (U, E). $ 
\end{defn}
\begin{defn}
Let $ \overline{(U, E)} $ and $ \overline{(V, E^{'})} $ be classes of fuzzy soft numbers over $ U $ and $ V $ with attributes from $ E $ and $ E^{'},$ respectively. Let $ p: U \longrightarrow V $ uniformly one-one onto and $ q: E \longrightarrow E^{'} $ be any mapping. Then $ f=(p, q): (U, E) \longrightarrow (V, E^{'}) $ is a fuzzy soft numbers mapping.
\end{defn}
\begin{problem}\label{prob422}
Let $ U=\{h_1, h_2, h_3, h_4, h_5\} $	and $ V=\{k_1, k_2, k_3, k_4, k_5\}, E=\{e_1, e_2, e_3\}, E'=\{e_1', e_2'\} $ and $ \overline{(U,E)},\overline{(V,E')} $ classes of fuzzy soft numbers. Let $ p(h_1)=k_1, p(h_2)=k_2, p(h_3)=k_3, p(h_4)=k_4, p(h_5)=k_5 $ and $ q(e_1)=e_2', q(e_2)=e_1', q(e_3)=e_2' . $ Let us  consider a fuzzy soft number $ H_A $ in $\overline{(U,E)} $ as
\begin{align*}
H_A=\{H(e_1)=\{(h_1, 0.0), (h_2, 0.6),(h_3, 1.0), (h_4, 0.8), (h_5, 0.1)\},\\
 H(e_2)=\{(h_1, 0.1), (h_2, 0.7),(h_3, 1.0), (h_4, 0.8), (h_5, 0.0)\},\\
 H(e_3)=\{(h_1, 0.3), (h_2, 0.9),(h_3, 1.0), (h_4, 0.7), (h_5, 0.2)\}\}.\end{align*}
Then the fuzzy soft number image of $ H_A $ under  $ f=(p,q):\overline{(U,E)}\longrightarrow\overline{(V,E')} $ is obtained as
\begin{align*}  f(H_A)(e_1')(k_1)&=\bigcup_{\alpha\tilde{\in}q^-1(e_1')\cap A,s\tilde{\in}p^-1(k_1)} (\alpha)\mu_s\\
&=\bigcup_{\alpha\tilde{\in}\{e_2\},s\tilde{\in}\{h_1\}} (\alpha)\mu_s\\
&=(e_2)\mu_{h_1}=\{0.1\}\end{align*}
\begin{align*}
 f(H_A)(e_1')(k_2)&=\bigcup_{\alpha\tilde{\in}q^-1(e_1')\cap A,s\tilde{\in}p^-1(k_2)} (\alpha)\mu_s\\
&=\bigcup_{\alpha\tilde{\in}\{e_2\},s\tilde{\in}\{h_2\}} (\alpha)\mu_s\\
&=(e_2)\mu_{h_2}=\{0.7\}.\end{align*}
	By similar calculations we get
\begin{align*}
 f(H_A)=\{e_1'=\{(k_1, 0.1), (k_2, 0.7),(k_3, 1.0), (k_4, 0.8), (k_5, 0.0)\},\\e_2'=\{(k_1, 0.3), (k_2, 0.9),(k_3, 1.0), (k_4, 0.8), (k_5, 0.2)\}\}.\end{align*}
Again consider a fuzzy soft number $ H_B' $ in $ \overline{(V,E')} $ as
\begin{align*}
H_B'=\{e_1'=\{(k_1, 0.2), (k_2, 0.8),(k_3, 1.0), (k_4, 0.7), (k_5, 0.1)\},\\e_2'=\{(k_1, 0.1), (k_2, 0.6),(k_3, 1.0), (k_4, 0.7), (k_5, 0.0)\}.\end{align*}
	Therefore
\[f^{-1}(H_B')(e_1)(h_1)=(q(e_1))\mu_{p(h_1)}=(e_2')\mu_{K_1}=\{0.1\}.\]
By similar calculations, we get
\begin{align*}
f^{-1}(H_B')=\{e_1=\{(h_1, 0.1), (h_2, 0.6),(h_3, 1.0), (h_4, 0.7), (h_5, 0.0)\}\\e_2=\{(h_1, 0.2), (h_2, 0.8),(h_3, 1.0), (h_4, 0.8), (h_5, 0.0)\}\\e_3=\{(h_1, 0.1), (h_2, 0.6),(h_3, 1.0), (h_4, 0.7), (h_5, 0.0)\}\}.\end{align*}
\end{problem}
\begin{prop}
Let $ \overline{(U, E)} $ and $ \overline{(V, E^{'})} $ be classes of fuzzy soft numbers.  Let $ p: U \longrightarrow V $ be not uniformly one-one onto and $ q: E \longrightarrow E^{'}$ be mapping. Then $ f=(p, q): (U, E) \longrightarrow (V, E^{'}) $ is not a fuzzy soft numbers mapping.
\end{prop}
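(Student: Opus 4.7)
The plan is to prove this by counterexample. The preceding definition calls $f=(p,q)$ a fuzzy soft numbers mapping only when $p$ is uniformly one-one onto, so it suffices to exhibit a fuzzy soft number $H_A$ in $\overline{(U,E)}$ whose image $f(H_A)$ violates one of the four conditions of Definition 3.12 (convexity, normalization, upper semi-continuity, or compact-support closure) as soon as $p$ fails bijectivity. The workhorse throughout will be failure of the convexity condition of Definition 3.1(i), since that alone disqualifies $f(H_A)$ from being a fuzzy soft number.

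I would split into two cases. First, suppose $p$ is \emph{not onto}, so that $p^{-1}(k_0)=\tilde{\phi}$ for some $k_0\tilde{\in}V$. By the defining formula of Definition 2.4, $f(H_A)(\beta)(k_0)=\bigcup_{s\tilde{\in}p^{-1}(k_0)}(\alpha)\mu_s$ is the empty union, so its membership value is $0$. Taking $H_A$ to be the convex, normalized fuzzy soft number of Example 3.17 and arranging $k_0$ to lie strictly between two objects of $V$ that receive positive membership in $f(H_A)$, the image carries a \emph{hole} at $k_0$. The inequality $\mu_{f(H_A)(\beta)}(\lambda k_1+(1-\lambda)k_2)\geq\min\{\mu_{f(H_A)(\beta)}(k_1),\mu_{f(H_A)(\beta)}(k_2)\}$ then fails at that midpoint, contradicting convexity and hence Definition 3.12.

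Second, suppose $p$ is \emph{not one-one}, with $p(h_i)=p(h_j)=k$ for distinct $h_i,h_j\tilde{\in}U$. The formula collapses the values $\mu_{F(e)}(h_i)$ and $\mu_{F(e)}(h_j)$ into a single membership $\max\{\mu_{F(e)}(h_i),\mu_{F(e)}(h_j)\}$ at $k$. Choosing $H_A$ so that $h_i$ carries the normalized peak ($\mu=1$) while $h_j$ lies off-peak, I can complete the choice of $p$ on the remaining objects so that the surviving peak in $V$ is no longer flanked by monotonically decreasing shoulders, producing a three-point configuration in $V$ where the middle object's membership violates the convex interpolation inequality. In either sub-case $f(H_A)$ is not a fuzzy soft number, so $f$ fails to carry fuzzy soft numbers to fuzzy soft numbers.

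The main obstacle is ensuring that the counterexample genuinely breaks convexity: a careless non-bijective $p$ can still accidentally yield a convex image (for instance, collapsing two adjacent off-peak objects of $H_A$ onto a single $k$ may simply produce a narrower but still convex membership function). So the construction must carefully place either the missing object (in the non-surjective case) or the collapsed pair (in the non-injective case) so as to \emph{force} the convexity inequality to be strict in the wrong direction. Once this placement is arranged, the conclusion is immediate from Definition 3.12 and from the hypothesis that a fuzzy soft numbers mapping must send fuzzy soft numbers to fuzzy soft numbers.
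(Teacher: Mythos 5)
Your overall strategy --- exhibit a fuzzy soft number whose image under $f$ fails the convexity clause in the definition of a fuzzy soft number --- is exactly the strategy the paper uses, and your reading of Definition 2.4 (an empty preimage gives membership $0$; a collapsed pair contributes the union of the memberships) is reasonable. But there is a genuine gap in your case decomposition. You split ``$p$ is not uniformly one-one onto'' into ``$p$ is not onto'' and ``$p$ is not one-one,'' i.e.\ you read the hypothesis as mere failure of bijectivity. The paper's own counterexample takes $p(h_1)=k_3$, $p(h_2)=k_5$, $p(h_3)=k_1$, $p(h_4)=k_2$, $p(h_5)=k_4$, which \emph{is} a bijection of $U$ onto $V$; what it is not is order-preserving, and it is the scrambling of the order of the objects (which Definition 3.1 has converted into integers) that destroys convexity: the image of $H_A$ at $e_1'$ comes out as $\{(k_1,1.0),(k_2,0.8),(k_3,0.1),(k_4,0.0),(k_5,0.7)\}$, where the value at $k_4$ falls below $\min\{0.1,0.7\}$. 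So the undefined term ``uniformly one-one onto'' evidently means something stronger than bijective, and your two cases do not cover the configuration the proposition is chiefly about.

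A second, smaller point cuts against both your argument and the paper's: the proposition is stated universally (for \emph{every} $p$ that is not uniformly one-one onto, $f$ fails to be a fuzzy soft numbers mapping), and a single counterexample, however carefully placed, only establishes the existential claim that \emph{some} such $p$ fails. Your own caveat --- that a careless non-bijective $p$ can still accidentally yield a convex image --- is precisely an admission that the universal statement cannot be reached by this method, and indeed suggests it is false as written. To close the argument you would need either to restate the proposition existentially or to show that every admissible non-uniform $p$ breaks at least one of the four conditions of Definition 3.12, which neither you nor the paper does.
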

\begin{proof}
From example \ref{prob422}, here we consider  $ p(h_1)=k_3, p(h_2)=k_5, p(h_3)=k_1, p(h_4)=k_2, p(h_5)=k_4.$
Then the image of $ H_A $ under  $ f=(p,q):\overline{(U,E)}\longrightarrow\overline{(V,E')} $ is obtained as
\begin{align*} f(H_A)(e_1')(k_1)&=\bigcup_{\alpha\tilde{\in}q^-1(e_1')\cap A,s\tilde{\in}p^-1(k_1)} (\alpha)\mu_s\\
&=\bigcup_{\alpha\tilde{\in}\{e_2\},s\tilde{\in}\{h_3\}} (\alpha)\mu_s\\
&=(e_2)\mu_{h_3}=\{1.0\}.\end{align*}
Similar we get
\begin{align*}
 f(H_A)=\{e_1'=\{(k_1, 1.0), (k_2, 0.8),(k_3, 0.1), (k_4, 0.0), (k_5, 0.7)\},\\e_2'=\{(k_1, 1.0), (k_2, 0.8),(k_3, 0.3), (k_4, 0.2), (k_5, 0.9)\}\end{align*}	
which is not a fuzzy soft numbers. This complete the proof.
\end{proof}
\begin{defn}
A fuzzy soft number mapping $ f=(p, q): (U, E) \longrightarrow (V, E^{'}) $ is said to be a one-one and onto if $ q: E\longrightarrow E^{'} $ be a one-one onto.
\end{defn}
\begin{thm}\label{thm435}
Suppose a fuzzy soft number mapping $ f=(p, q): (U, E) \longrightarrow (V, E^{'}) $ is one-one onto  of two fuzzy soft  metric spaces
$ (U, E, \tilde{d_1}) $ and  $ (V, E^{'}, \tilde{d_2}). $ If $ F_A $ and $ G_A $ are two fuzzy soft numbers of $ (U, E), $ then   $\tilde{d_1}(F_A, G_A)=\tilde{d_2}(f(F_A), f(G_A)).$
\end{thm}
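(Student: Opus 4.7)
The plan is to exploit the bijectivity of $p$ and $q$ to show that $f$ merely relabels the underlying object--parameter grid without changing any membership value, so that the two Hausdorff maxima are computed over identical finite collections of numbers.

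First I would unravel the image formula for $f(F_A)$. Since $q$ is one-one and onto, for every $\beta\in E'$ the preimage $q^{-1}(\beta)$ reduces to a singleton, and likewise $p^{-1}(y)$ is a singleton for each $y\in V$. Substituting these singletons into the defining union for $f(F_A)$ collapses it to a single term and yields $\mu_{f(F_A)(q(e_i))}(p(h_t))=\mu_{F(e_i)}(h_t)$ for every pair of indices $(i,t)$, together with the analogous identity for $G_A$.

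Next I would lift this pointwise identity to the intersection of parameters that appears in the Hausdorff distance. Because $q$ is bijective it carries $\bigcap_i e_i$ to $\bigcap_i q(e_i)$, so the same collapse gives $\mu_{f(F_A)(\cap_i q(e_i))}(p(h_t))=\mu_{F(\cap_i e_i)}(h_t)$ and likewise for $G_A$. I would then expand $\tilde{d_2}(f(F_A),f(G_A))$ directly from its definition; since $p$ is a bijection from $U$ onto $V$, the index $t$ runs over the same finite range on both sides, so term by term $|\mu_{f(F_A)(\cap_i q(e_i))}(p(h_t))-\mu_{f(G_A)(\cap_i q(e_i))}(p(h_t))|=|\mu_{F(\cap_i e_i)}(h_t)-\mu_{G(\cap_i e_i)}(h_t)|$, and taking maxima yields $\tilde{d_2}(f(F_A),f(G_A))=\tilde{d_1}(F_A,G_A)$.

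The main difficulty is bookkeeping rather than depth: one must verify that the bijectivity of $p$ and $q$ is genuinely what forces the union in the image formula to collapse to a single term, and that $q$ commutes with the parameter intersection used in the distance. Example~\ref{prob422} and the preceding proposition already signal that without bijectivity $f(F_A)$ may fail even to be a fuzzy soft number, so any isometry statement must rest precisely on this collapse. Once the two checks are in place, equality of the two Hausdorff maxima is immediate.
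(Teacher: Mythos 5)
Your proposal is correct and follows essentially the same route as the paper's own proof: both rest on the observation that bijectivity of $p$ and $q$ forces $\mu_{f(F_A)(\cap_i q(e_i))}(p(h_t))=\mu_{F(\cap_i e_i)}(h_t)$, from which equality of the two Hausdorff maxima is immediate. The paper simply asserts this identity, whereas you derive it by collapsing the union in the image formula to a single term; that is a filling-in of detail rather than a different argument.
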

\begin{proof}
Since $ f $ is one-one onto. Therefore $ p $ and $ q $ are also one-one onto. Therefore
 \[\mu_{F(\cap_i(e_i))}(h_t)=\mu_{f(F)(\cap_i(e_i))}(h_t)\] and  \[\mu_{G(\cap_i(e_i))}(h_t)=\mu_{f(G)(\cap_i(e_i))}(h_t)\] for all $F_A, G_A \tilde{\in}(U,E).$ \\
 Hence,  $\tilde{d_1}(F_A, G_A)=\tilde{d_2}(f(F_A), f(G_A)).$
\end{proof}
\begin{defn}
Let  $ (U, E, \tilde{d_1}) $ and  $ (V, E^{'}, \tilde{d_2}) $ be any two fuzzy soft  metric spaces. A fuzzy soft number function (or mapping) $ f: (U, E) \longrightarrow (V, E^{'}) $ is said to be fuzzy soft continuous at $ F^{'}_A $ of $ (U, E), $ if for given $ \varepsilon \tilde{>}0 $ there exists a $ \delta \tilde{>} 0, $ such that 	$\tilde{d_2}(f(F_A), f(F^{'}_A))\tilde{\leq}\varepsilon, $ whenever $\tilde{d_1}(F_A, F^{'}_A)\tilde{\leq}\delta. $ \\

	i.e., for each fuzzy soft open sphere  $ S_\epsilon(f(F^{'}_A )) $ centered at $f(F^{'}_A ) $ there is a  fuzzy soft open sphere  $ S_\delta(F^{'}_A ) $ centered at $ F^{'}_A  $ such that
	\[f(S_\delta(F^{'}_A ) )\tilde{\subseteq}S_\varepsilon(f(F^{'}_A )). \]
\end{defn}
\begin{defn}
A fuzzy soft number function $ f: (U, E, \tilde{d_1}) \longrightarrow (V, E^{'}, \tilde{d_2}) $ is said to be fuzzy soft  continuous, if it is fuzzy soft  continuous at each fuzzy soft number of $ (U, E). $
\end{defn}
\begin{prop}
	Every inverse image of soft number function is fuzzy soft  continuous.
\end{prop}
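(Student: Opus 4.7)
The plan is to interpret the statement as asserting that for a one-one onto fuzzy soft number mapping $f=(p,q):(U,E)\longrightarrow (V,E')$ between fuzzy soft metric spaces $(U,E,\tilde{d_1})$ and $(V,E',\tilde{d_2})$, the inverse map $f^{-1}:(V,E')\longrightarrow (U,E)$ is fuzzy soft continuous in the sense of the $\varepsilon$-$\delta$ definition just given. Since Definition 4.34 forces $q$ to be one-one onto (and by Definition 4.29 together with the preceding proposition $p$ is uniformly one-one onto), $f^{-1}$ is itself a well-defined fuzzy soft number mapping of the same type, so Theorem \ref{thm435} applies to it.

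First I would fix an arbitrary fuzzy soft number $G'_B\tilde{\in}(V,E')$ and an arbitrary $\varepsilon\tilde{>}0$. I then simply set $\delta=\varepsilon$. For any $G_B\tilde{\in}(V,E')$ with $\tilde{d_2}(G_B,G'_B)\tilde{\leq}\delta$, I invoke Theorem \ref{thm435} applied to the one-one onto mapping $f^{-1}$ (equivalently, applied to $f$ with the roles of the two sides interchanged) to obtain
\begin{equation*}
\tilde{d_1}\bigl(f^{-1}(G_B),f^{-1}(G'_B)\bigr)=\tilde{d_2}(G_B,G'_B)\tilde{\leq}\delta=\varepsilon.
\end{equation*}
This is exactly the continuity condition at the point $G'_B$, so $f^{-1}$ is fuzzy soft continuous at $G'_B$, and since $G'_B$ was arbitrary, $f^{-1}$ is fuzzy soft continuous on $(V,E')$.

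In the sphere formulation I would phrase the same argument by remarking that the isometry identity from Theorem \ref{thm435} implies $f^{-1}\bigl(S_{\varepsilon}(G'_B)\bigr)\tilde{\subseteq}S_{\varepsilon}(f^{-1}(G'_B))$, so every fuzzy soft open sphere about $f^{-1}(G'_B)$ is matched by a fuzzy soft open sphere of the same radius about $G'_B$ whose image under $f^{-1}$ sits inside it. The only potential obstacle is the interpretation of the phrase ``inverse image of soft number function'' and the verification that Theorem \ref{thm435} is genuinely applicable to $f^{-1}$; once one observes that the hypotheses of Definition 4.34 are symmetric under taking inverses, the rest of the argument is a one-line application of the isometry identity, and no further computation is needed.
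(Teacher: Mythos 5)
The paper offers no argument at all here --- its proof reads simply ``Obvious'' --- so there is nothing of substance to compare your reasoning against; what can be assessed is whether your reconstruction actually establishes the stated proposition. Your central idea (an isometry, or more generally a non-expansive map, is continuous with $\delta=\varepsilon$) is sound, and under your hypotheses the argument goes through. The difficulty is that you have silently strengthened the hypotheses. The proposition speaks of \emph{every} inverse image of a soft number function, and in this paper a fuzzy soft number mapping $f=(p,q)$ only requires $p$ to be uniformly one-one onto; $q$ is explicitly allowed to be \emph{any} mapping, and Example \ref{prob422} computes $f^{-1}(H_B')$ precisely for a $q$ that is not injective ($q(e_1)=q(e_3)=e_2'$). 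For such $f$ there is no inverse function; $f^{-1}$ is the inverse-image operation $f^{-1}(G_B)(\alpha)(x)=(q(\alpha))\mu_{p(x)}$ of the preliminaries, and Theorem \ref{thm435} --- which is stated only for one-one onto mappings and whose application to ``$f^{-1}$'' further presupposes $f\circ f^{-1}=\mathrm{id}$ --- is simply not available. So as written your proof covers only the bijective case, which is strictly narrower than the statement.

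The good news is that your $\delta=\varepsilon$ strategy survives in the general case if you replace the appeal to Theorem \ref{thm435} by a direct estimate. Since the Hausdorff distance is computed from the parameter-wise intersection, one has
\begin{equation*}
\mu_{f^{-1}(G_B)(\cap_\alpha \alpha)}(x)=\min_{\alpha}\,\mu_{G(q(\alpha))}(p(x)),
\end{equation*}
and the elementary inequality $\bigl|\min_i a_i-\min_i b_i\bigr|\leq\max_i|a_i-b_i|$ then yields
\begin{equation*}
\tilde{d_1}\bigl(f^{-1}(G_B),f^{-1}(G'_B)\bigr)\tilde{\leq}\tilde{d_2}(G_B,G'_B),
\end{equation*}
i.e.\ $f^{-1}$ is non-expansive, hence (uniformly) fuzzy soft continuous, with no injectivity assumption on $q$. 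I would recommend recasting your proof in this form: it proves the proposition in the generality in which it is stated, and it recovers your isometry argument as the special case where $q$ is a bijection.
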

\begin{proof}
	Obvious.		
\end{proof}

\begin{thm}
Let  $ (U, E, \tilde{d_1}) $ and  $ (V, E^{'}, \tilde{d_2}) $ be any two fuzzy soft metric spaces and $ f: (U, E) \longrightarrow (V, E^{'}) $ be fuzzy soft continuous. Then for every soft number sequence $ {(F_A)_n} $ converges to $ F^{'}_A $ we have
$ \lim_{n\tilde{\rightarrow} \infty}f((F_A)_n)=f( F^{'}_A) $
i.e $(F_A)_n \tilde{\rightarrow}F^{'}_A\Rightarrow f((F_A)_n)\tilde{\rightarrow}f( F^{'}_A).$
\end{thm}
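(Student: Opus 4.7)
The plan is to mimic the classical $\varepsilon$--$\delta$ proof that continuous functions preserve sequential limits, transcribed into the fuzzy soft setting developed in the paper. Concretely, I want to take an arbitrary tolerance $\varepsilon\tilde{>}0$, use the continuity of $f$ at $F'_A$ to produce a corresponding $\delta\tilde{>}0$, and then use the convergence of $\{(F_A)_n\}$ to $F'_A$ in $(U,E,\tilde{d_1})$ to push this $\delta$-closeness through $f$ into $\varepsilon$-closeness in $(V,E',\tilde{d_2})$.

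The steps would be as follows. First, fix $\varepsilon\tilde{>}0$. Apply the definition of fuzzy soft continuity at $F'_A$: since $f$ is fuzzy soft continuous (hence continuous at $F'_A$), there exists $\delta\tilde{>}0$ such that
\[
\tilde{d_1}(F_A,F'_A)\tilde{\leq}\delta \;\Longrightarrow\; \tilde{d_2}(f(F_A),f(F'_A))\tilde{\leq}\varepsilon
\]
for every fuzzy soft number $F_A\tilde{\in}(U,E)$. Next, apply the convergence hypothesis $(F_A)_n\tilde{\rightarrow}F'_A$: by the definition of convergence of a sequence of fuzzy soft numbers, there exists a positive integer $m$ such that
\[
\tilde{d_1}((F_A)_n,F'_A)\tilde{\leq}\delta\quad\text{for all }n\geq m.
\]
Combining these two implications, for every $n\geq m$ we obtain
\[
\tilde{d_2}(f((F_A)_n),f(F'_A))\tilde{\leq}\varepsilon,
\]
which is exactly the definition of $f((F_A)_n)\tilde{\rightarrow}f(F'_A)$, so $\lim_{n\tilde{\rightarrow}\infty}f((F_A)_n)=f(F'_A)$.

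There is no real obstacle here; the argument is a direct transcription of the classical real-analysis proof, and all the ingredients (the notion of fuzzy soft open sphere, convergence of sequences, and the $\varepsilon$--$\delta$ formulation of fuzzy soft continuity) have already been set up in the previous definitions. The only point that requires a small note is to make sure that $\delta$ can be chosen in $(0,1)$, since the metric $\tilde{d}$ takes values in $[0,1]$; this is harmless because the continuity definition allows any $\delta\tilde{>}0$, and if $\delta\tilde{\geq}1$ the condition $\tilde{d_1}((F_A)_n,F'_A)\tilde{\leq}\delta$ is automatic. One may equivalently rephrase the argument in terms of open spheres: $f^{-1}(S_\varepsilon(f(F'_A)))\tilde{\supseteq}S_\delta(F'_A)$, and eventually $(F_A)_n\tilde{\in}S_\delta(F'_A)$, so eventually $f((F_A)_n)\tilde{\in}S_\varepsilon(f(F'_A))$.
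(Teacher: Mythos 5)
Your proposal is correct and follows essentially the same $\varepsilon$--$\delta$ argument as the paper: use fuzzy soft continuity at $F'_A$ to get $\delta$, use convergence of $\{(F_A)_n\}$ to get eventual $\delta$-closeness, and combine. If anything, your write-up is cleaner, since the paper's version momentarily ``supposes'' the conclusion $\lim_{n\tilde{\rightarrow}\infty}f((F_A)_n)=f(F'_A)$ in mid-proof, a slip you avoid.
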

\begin{proof}
Since $ f $ is a fuzzy soft number function. Therefore $ f $ is a fuzzy soft  continuous. Let $ f $ be a fuzzy soft  continuous at $ F^{'}_A.$ Therefore for any  given $ \varepsilon \tilde{>}0 $ there exists a $ \delta \tilde{>} 0, $ such that
\begin{equation}\label{eqa41}
\tilde{d_2}(f(X_A), f(F^{'}_A))\tilde{\leq}\varepsilon,  \mbox{~whenever~}
\tilde{d_1}(X_A, F^{'}_A)\tilde{\leq}\delta. \end{equation}
Let us suppose that $ {(F_A)_n} $ be a soft number sequence in  $ (U, E,\tilde{d_1}),$ such that
\[\lim_{n\tilde{\rightarrow} \infty}f((F_A)_n)=f( F^{'}_A).\]
Since  $ \lim_{n\tilde{\rightarrow} \infty}(F_A)_n= F^{'}_A, $ therefore there exists a positive integer $ m $ such that
\[\tilde{d_1}((F_A)_n, F^{'}_A)\tilde{\leq}\delta, \forall n\geq m. \]
Now from (\ref{eqa41}), we get
\begin{align*}
&\tilde{d_2}(f((F_A)_n), f(F^{'}_A))\tilde{\leq}\varepsilon, \forall n\geq m.\end{align*}
This implies that
$ \lim_{n\tilde{\rightarrow} \infty}f((F_A)_n)=f( F^{'}_A) .$
\end{proof}
\begin{defn}
	Let  $ (U, E, \tilde{d_1}) $ and  $ (V, E^{'}, \tilde{d_2}) $ be any two fuzzy soft  metric spaces. A fuzzy soft number function $ f: (U, E) \longrightarrow (V, E^{'}) $ is said to be fuzzy soft uniformly  continuous  if for each $ \varepsilon \tilde{>}0 $ there exists a $ \delta \tilde{>} 0, $ such that
	\[\tilde{d_2}(f(X_A), f(Y_A))\tilde{\leq}\varepsilon, \mbox{~whenever~}
	\tilde{d_1}(X_A, Y_A)\tilde{\leq}\delta,  \forall X_A, Y_A \tilde{\in} (U,E).\]
\end{defn}
\begin{prop}
Every fuzzy soft number function is uniformly continuous.
\end{prop}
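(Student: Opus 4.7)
The plan is to reduce uniform continuity directly to the isometry property established in Theorem \ref{thm435}. By definition, every fuzzy soft number mapping $f=(p,q)$ has $p:U\to V$ uniformly one-one onto, so the component that affects the object-level membership values is a bijective relabeling. The key observation is that the Hausdorff distance
\[
\tilde{d}(F_A,G_A)=\max\{\,|\mu_{F(\cap_i(e_i))}(h_t)-\mu_{G(\cap_i(e_i))}(h_t)|\,\}
\]
depends only on the collection of membership values, which is preserved under such a bijective renaming of the underlying objects.

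First I would argue, exactly as in the proof of Theorem \ref{thm435}, that for any two fuzzy soft numbers $X_A,Y_A\tilde{\in}(U,E)$ one has the identities $\mu_{X(\cap_i(e_i))}(h_t)=\mu_{f(X)(\cap_i(e_i))}(p(h_t))$ and $\mu_{Y(\cap_i(e_i))}(h_t)=\mu_{f(Y)(\cap_i(e_i))}(p(h_t))$. Because $p$ is a bijection on $U$, the set of pairwise absolute differences $|\mu_{X}(h_t)-\mu_{Y}(h_t)|$ and $|\mu_{f(X)}(p(h_t))-\mu_{f(Y)}(p(h_t))|$ are the same multiset, so their maxima coincide. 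Hence
\[
\tilde{d_1}(X_A,Y_A)=\tilde{d_2}(f(X_A),f(Y_A))\quad\text{for all }X_A,Y_A\tilde{\in}(U,E).
\]

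Second, given this isometry, uniform continuity is immediate: for any $\varepsilon\tilde{>}0$, set $\delta=\varepsilon$. Then for every pair $X_A,Y_A\tilde{\in}(U,E)$ with $\tilde{d_1}(X_A,Y_A)\tilde{\leq}\delta$, we automatically obtain $\tilde{d_2}(f(X_A),f(Y_A))=\tilde{d_1}(X_A,Y_A)\tilde{\leq}\varepsilon$, which is precisely the defining inequality for fuzzy soft uniform continuity. The choice of $\delta$ depends only on $\varepsilon$ and not on the particular point $X_A$, so uniformity is built in from the isometry.

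The main obstacle in writing this out rigorously will be handling the case where $q:E\to E'$ is not one-one. In that situation, the image construction from Definition 2.4 involves a union over $q^{-1}(\beta)\cap A$, which could merge membership values from several parameters under a single image parameter; one must verify that this merging still respects the intersection-of-parameters convention used in the Hausdorff distance, so that the identities above really continue to hold. Once that bookkeeping is done, the argument collapses to the single line $\delta=\varepsilon$, which is why the conclusion is as strong as it is.
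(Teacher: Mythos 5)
The paper offers no argument for this proposition (its proof reads ``Obvious''), so the only question is whether your argument stands on its own, and it does not: the issue you flag in your final paragraph is not bookkeeping but the entire difficulty, and it cannot be closed. Theorem \ref{thm435} gives the isometry identity only for mappings that are one-one onto in the paper's sense, i.e.\ with $q$ injective and surjective; a general fuzzy soft number function only requires $p$ to be uniformly one-one onto, with $q$ arbitrary. When $q$ is not injective, the image $f(F_A)(\beta)$ is a union (pointwise maximum) over the fibre $q^{-1}(\beta)\cap A$, so the quantity entering $\tilde{d_2}$ is a minimum over $\beta$ of maxima over fibres, whereas $\tilde{d_1}$ sees only the minimum over all parameters of the original memberships. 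The first functional is not controlled by the second, so neither the isometry identity nor any $\varepsilon$--$\delta$ estimate survives.

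Concretely, take $U=\{h_1,h_2,h_3\}$, $E=\{e_1,e_2\}$, $E'=\{e'\}$, $p$ the identity and $q(e_1)=q(e_2)=e'$. Let $X_A$ have memberships $(0.5,1,0.2)$ at $e_1$ and $(0.2,1,0.5)$ at $e_2$, and let $Y_A$ have $(0.2,1,0.2)$ at both parameters; both are convex and normalized, hence fuzzy soft numbers. Their intersection memberships coincide (both are $(0.2,1,0.2)$), so $\tilde{d_1}(X_A,Y_A)=0$, yet $\mu_{f(X)(e')}(h_1)=\max(0.5,0.2)=0.5$ while $\mu_{f(Y)(e')}(h_1)=0.2$, giving $\tilde{d_2}(f(X_A),f(Y_A))=0.3$. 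Thus no $\delta$ works for $\varepsilon<0.3$, and $f$ fails even to be continuous at $Y_A$. Your argument is correct precisely in the special case where $q$ is one-one onto, where it reduces to Theorem \ref{thm435} and $\delta=\varepsilon$; for the general statement the merging step you postponed is not completable, and the proposition itself fails under the paper's definitions.
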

\begin{proof}
Obvious.
\end{proof}
\begin{thm}
Every fuzzy soft number function  $ f: (U, E) \longrightarrow (V, E^{'}) $ which is uniformly continuous on $(U, E)  $ is necessarily soft continuous on $(U, E) . $
\end{thm}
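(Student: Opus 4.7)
The plan is to derive continuity directly from uniform continuity by specializing the ``for all $X_A, Y_A$'' quantifier in the uniform continuity definition to ``for all $X_A$ and a fixed $Y_A = F'_A$''. This is the standard implication uniform continuity $\Rightarrow$ continuity, and the proof should be essentially a one-step unpacking of definitions with no real content beyond comparing the two quantifier patterns.

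First I would fix an arbitrary fuzzy soft number $F'_A \tilde{\in}(U,E)$ and an arbitrary $\varepsilon \tilde{>} 0$; my goal is to produce a $\delta \tilde{>} 0$ witnessing continuity of $f$ at $F'_A$ in the sense of the earlier definition, i.e.\ such that $\tilde{d_2}(f(F_A), f(F'_A)) \tilde{\leq} \varepsilon$ whenever $\tilde{d_1}(F_A, F'_A) \tilde{\leq} \delta$. Next I would invoke the uniform continuity hypothesis on $f$ with this same $\varepsilon$ to obtain some $\delta \tilde{>} 0$ such that for every pair $X_A, Y_A \tilde{\in}(U,E)$, $\tilde{d_1}(X_A, Y_A) \tilde{\leq} \delta$ implies $\tilde{d_2}(f(X_A), f(Y_A)) \tilde{\leq} \varepsilon$. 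Finally I would apply this to the particular pair $X_A = F_A$ and $Y_A = F'_A$ (which is legal since the uniform bound holds for \emph{all} such pairs), giving $\tilde{d_2}(f(F_A), f(F'_A)) \tilde{\leq} \varepsilon$ whenever $\tilde{d_1}(F_A, F'_A) \tilde{\leq} \delta$, which is precisely fuzzy soft continuity at $F'_A$. Since $F'_A$ was arbitrary, $f$ is fuzzy soft continuous on $(U,E)$.

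There is no real obstacle here: the only subtlety is being careful that the $\delta$ produced by uniform continuity does not depend on $F'_A$, which is exactly why uniform continuity is strictly stronger than pointwise continuity, and which is why the implication goes trivially in this direction. I would simply note that the $\delta$ from uniform continuity works simultaneously at every fuzzy soft number, and in particular at $F'_A$.
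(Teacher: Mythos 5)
Your proposal is correct: fixing $F'_A$ and $\varepsilon$, invoking uniform continuity to get a single $\delta$ valid for all pairs, and then specializing $Y_A = F'_A$ is exactly the standard quantifier-specialization argument, and it matches the paper, which simply declares the result ``Obvious'' without writing out any details. Your version is a faithful and complete filling-in of that omitted routine argument.
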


\begin{proof}
	Obvious.
\end{proof}
\begin{thm}
The continuous image of a fuzzy soft Cauchy sequence is again a fuzzy soft Cauchy sequence.
\end{thm}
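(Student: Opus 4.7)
The plan is to exploit the uniform continuity property that the paper establishes just above this theorem, and then use a standard three-line Cauchy transport argument.

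First I will set up notation. Let $f : (U, E, \tilde{d_1}) \longrightarrow (V, E', \tilde{d_2})$ be a fuzzy soft continuous number function and let $\{(F_A)_n\}$ be a fuzzy soft Cauchy sequence in $(U, E, \tilde{d_1})$. I need to show that the image sequence $\{f((F_A)_n)\}$ is Cauchy in $(V, E', \tilde{d_2})$. That is, for every $\varepsilon \tilde{>} 0$ I must produce a threshold $\beta$ such that $\tilde{d_2}(f((F_A)_n), f((F_A)_m)) \tilde{\leq} \varepsilon$ for all $n, m \tilde{\geq} \beta$.

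Next I invoke the two key ingredients already available earlier in the paper. By the proposition that every fuzzy soft number function is uniformly continuous, there exists $\delta \tilde{>} 0$ such that for all $X_A, Y_A \tilde{\in} (U, E)$,
\begin{equation*}
\tilde{d_1}(X_A, Y_A) \tilde{\leq} \delta \;\Longrightarrow\; \tilde{d_2}(f(X_A), f(Y_A)) \tilde{\leq} \varepsilon.
\end{equation*}
Then, because $\{(F_A)_n\}$ is a fuzzy soft Cauchy sequence, applied with this particular $\delta$ there is a positive integer $\beta$ with $\tilde{d_1}((F_A)_n, (F_A)_m) \tilde{\leq} \delta$ for all $n, m \tilde{\geq} \beta$.

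Finally, I combine the two: substituting $X_A = (F_A)_n$ and $Y_A = (F_A)_m$ into the uniform continuity implication yields $\tilde{d_2}(f((F_A)_n), f((F_A)_m)) \tilde{\leq} \varepsilon$ for all $n, m \tilde{\geq} \beta$, which is exactly the Cauchy condition for $\{f((F_A)_n)\}$. Since $\varepsilon$ was arbitrary, the image sequence is Cauchy. There is no real obstacle here; the entire content of the argument is handed to us by the earlier uniform continuity proposition, so the only care needed is to make sure the $\delta$ chosen from uniform continuity is used to pick the Cauchy index $\beta$ (and not the other way around).
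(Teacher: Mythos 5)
Your proposal is correct and follows essentially the same route as the paper's own proof: both upgrade continuity to uniform continuity via the earlier proposition that every fuzzy soft number function is uniformly continuous, then choose $\delta$ from uniform continuity first and the Cauchy threshold second to transport the Cauchy condition to the image sequence. No substantive difference.
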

\begin{proof}
	Let  $ (U, E, \tilde{d_1}) $ and  $ (V, E^{'}, \tilde{d_2}) $ be any two fuzzy soft metric spaces and  $ f: (U, E) \longrightarrow (V, E^{'}) $ be fuzzy soft number function. \\
	We have every soft number function is uniformly continuous.	Therefore $ f $ is uniformly continuous. \\
	Let $ \{(F_A)_n\} $ be a fuzzy soft number Cauchy sequence in $ (U,E) $ and given $ \varepsilon \tilde{>}0. $ Then $ f $ being a fuzzy soft  uniformly continuous, there exists a $ \delta \tilde{>}0 $ such that
		\begin{equation}\label{eqa42}
		\tilde{d_2}(f((F_A)_m), f((Y_A)_n))\tilde{\leq}\varepsilon \mbox{~whenever~}
		\tilde{d_1}((F_A)_m, (F_A)_n)\tilde{\leq}\delta. \end{equation}
		Since $ \{(F_A)_n\} $ is fuzzy soft Cauchy sequence, corresponding to this  $ \delta \tilde{>}0 $ there exists a positive integer $ n_0 $ such that
		\begin{equation}\label{eqa43}
		\tilde{d_1}((F_A)_m, (F_A)_n)\tilde{\leq}\delta  \mbox{~~for~all~} m, n \tilde{\geq}n_0. \end{equation}
		From (\ref{eqa42}) and (\ref{eqa43}) we have
			\[\tilde{d_2}(f((F_A)_m), f((Y_A)_n))\tilde{\leq}\varepsilon \mbox{~~for~all~} m, n \tilde{\geq}n_0.  \]
			Hence
	 $ \{(F_A)_n\} $  is a soft  Cauchy sequence in $(V, E^{'}).  $
		 \end{proof}
\begin{defn}
	Let  $ (U, E, \tilde{d_1}) $ and  $ (V, E^{'}, \tilde{d_2}) $ be any two fuzzy soft metric spaces. A fuzzy soft number function $ f=(p, q): (U, E) \longrightarrow (V, E^{'}) $ is said to be fuzzy soft homeomorphism if $ q: E\longrightarrow  E^{'} $ is one-one and onto.
\end{defn}
\begin{thm}
	Every fuzzy soft number one one onto function is fuzzy soft homeomorphism.
\end{thm}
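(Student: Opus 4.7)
The plan is to observe that the conclusion follows almost immediately from definitions, and then to strengthen the argument by invoking Theorem \ref{thm435} to confirm that the topological features implicit in the word ``homeomorphism'' also hold. My first step is to unpack the hypothesis: a fuzzy soft number function $f=(p,q):(U,E)\longrightarrow(V,E')$ being one-one and onto means, by the definition immediately preceding this theorem, that the parameter map $q:E\longrightarrow E'$ is a bijection. My second step is to unpack the conclusion: by the definition of fuzzy soft homeomorphism given just above, it likewise reduces to the single requirement that $q:E\longrightarrow E'$ be one-one and onto. The two hypotheses literally coincide, so the implication is immediate.

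To flesh the argument out beyond a one-line tautology, I would next note that because $f$ is a fuzzy soft number mapping, the underlying object map $p:U\longrightarrow V$ is uniformly one-one onto by the defining condition of fuzzy soft number mappings, so both components of $f$ are bijective. This is useful because it allows me to invoke Theorem \ref{thm435}, which states that for such an $f$ and any two fuzzy soft numbers $F_A,G_A\tilde{\in}(U,E)$, one has
\[
\tilde{d_1}(F_A,G_A)=\tilde{d_2}(f(F_A),f(G_A)).
\]
From this isometric identity, given any $\varepsilon\tilde{>}0$, choosing $\delta=\varepsilon$ yields $\tilde{d_2}(f(F_A),f(F'_A))\tilde{\leq}\varepsilon$ whenever $\tilde{d_1}(F_A,F'_A)\tilde{\leq}\delta$, so $f$ is (in fact uniformly) fuzzy soft continuous. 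The same argument applied to the inverse map $f^{-1}=(p^{-1},q^{-1})$, which is well defined since $p$ and $q$ are bijections, gives fuzzy soft continuity of $f^{-1}$.

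Putting the pieces together, the write-up would proceed in the order: (i) hypothesis $\Longleftrightarrow$ $q$ bijective $\Longleftrightarrow$ definition of fuzzy soft homeomorphism, hence $f$ is a fuzzy soft homeomorphism; (ii) as a sanity check, record via Theorem \ref{thm435} that $f$ is an isometry of the fuzzy soft metrics, and therefore both $f$ and $f^{-1}$ are fuzzy soft continuous. I expect no real obstacle here because the paper's definition of ``fuzzy soft homeomorphism'' is calibrated so that the statement is essentially a reformulation of the hypothesis; the only delicate point is to make sure the appeal to Theorem \ref{thm435} is legitimate, which it is since the bijectivity of $q$ combined with the uniform one-one onto property of $p$ inherent in any fuzzy soft number mapping supplies exactly the hypotheses of that theorem.
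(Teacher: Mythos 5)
Your core argument—unpacking the hypothesis to get that $q:E\longrightarrow E'$ is one-one onto and observing that this is exactly the paper's definition of a fuzzy soft homeomorphism—is precisely the paper's own (one-line) proof, so the proposal is correct and takes essentially the same approach. The additional material invoking Theorem \ref{thm435} to check continuity of $f$ and $f^{-1}$ is a harmless supplement that the paper does not include, since its definition of homeomorphism imposes no continuity requirement.
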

\begin{proof}
Suppose $ f=(p, q): (U, E) \longrightarrow (V, E^{'}) $ is one-one onto function. Therefore $ p: U \longrightarrow V$ uniformly one-one onto and $ q: E\longrightarrow E^{'} $ is one-one onto.\\
Hence $ f $ is fuzzy soft homeomorphism. 	
\end{proof}
\begin{defn}
	Let  $ (U, E, \tilde{d_1}) $ and  $ (V, E^{'}, \tilde{d_2}) $ be any two fuzzy soft  metric spaces. A fuzzy soft number function $ f: (U, E) \longrightarrow (V, E^{'}) $ is called a fuzzy soft  isometry if
	\[ \tilde{d_1}(X_A, Y_A)=\tilde{d_2}(f(X_A), f(Y_A)) , \forall X_A, Y_A \tilde{\in}(U, E).\]
\end{defn}
\begin{thm}
	Every one one onto fuzzy soft number function is a  fuzzy soft  isometry function.
\end{thm}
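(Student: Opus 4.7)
The plan is to obtain the isometry conclusion as an almost immediate corollary of Theorem \ref{thm435}, so the work consists almost entirely of unpacking definitions and verifying that the hypotheses line up. First I would recall that, by Definition 4.21, calling $f=(p,q)\colon(U,E)\longrightarrow(V,E')$ a fuzzy soft number mapping in the first place already forces $p\colon U\longrightarrow V$ to be uniformly one-one and onto; then Definition 4.34 tells me that saying $f$ itself is one-one and onto adds the requirement that $q\colon E\longrightarrow E'$ is one-one and onto. Putting these together, the hypothesis of the theorem gives both $p$ uniformly bijective and $q$ bijective, which is precisely the setup assumed in Theorem \ref{thm435}.

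Next I would apply Theorem \ref{thm435} directly: for any two fuzzy soft numbers $X_A,Y_A\tilde{\in}(U,E)$ we get
\[\tilde{d_1}(X_A,Y_A)=\tilde{d_2}(f(X_A),f(Y_A)).\]
Comparing this equality with the definition of a fuzzy soft isometry introduced just before the statement, I see it is exactly the required condition, and it holds for arbitrary $X_A,Y_A$ in $(U,E)$. Hence $f$ is a fuzzy soft isometry, which completes the argument.

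The only step that could be called a potential obstacle is the bookkeeping between Definitions 4.21 and 4.34: one needs to be careful that the abbreviated ``one-one onto'' in the hypothesis of the present theorem really does supply both the uniform bijectivity of $p$ and the bijectivity of $q$ that Theorem \ref{thm435} consumes. Once this identification is made, no further computation is needed, and the proof reduces to a one-line citation of Theorem \ref{thm435} followed by an appeal to the definition of isometry.
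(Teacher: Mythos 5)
Your proposal is correct and matches the paper's own argument, which simply cites Theorem \ref{thm435}; you have merely spelled out the definitional bookkeeping (that ``one-one onto'' for $f$ supplies the bijectivity of $q$ on top of the uniform bijectivity of $p$ already built into the notion of a fuzzy soft number mapping) that the paper leaves implicit. No further comment is needed.
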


\begin{proof}
Follows from Theorem \ref{thm435}.
\end{proof}
\begin{thm}\label{thm438}
	Every fuzzy soft  isometry function is not a one one onto fuzzy soft number function.
\end{thm}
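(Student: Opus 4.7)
The statement is most naturally read as the non-converse to the immediately preceding theorem: not every fuzzy soft isometry is a one-one onto fuzzy soft number function. Accordingly, my plan is to exhibit an explicit counterexample $f=(p,q)$ which is a fuzzy soft isometry but fails the one-one onto condition of Definition 4.34, and to verify the isometry identity directly from Definition 4.1.

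First, I would fix a single carrier by taking $U=V$ and letting $p:U\longrightarrow V$ be the identity, which is vacuously uniformly one-one onto; this guarantees that $f=(p,q)$ qualifies as a fuzzy soft number mapping in the sense of Definition 4.21, regardless of the choice of $q$. I would then choose parameter sets $E$ and $E'$ together with a map $q:E\longrightarrow E'$ that is onto but not injective, for instance two parameters of $E$ collapsed to a single parameter of $E'$. Since $q$ is not one-one, Definition 4.34 immediately gives that $f$ is not a one-one onto fuzzy soft number function, handling half of the claim before any distance calculation is done.

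Second, I would restrict attention to the subclass of fuzzy soft numbers $F_A$ whose component $F(e)$ is constant on each fibre of $q$. For any such $F_A$, the union in the image formula of Definition 2.4 reduces to a single term for every target parameter and every $y\tilde{\in}V$, so $f(F_A)$ is obtained by merely relabelling parameters and carries the same membership values on $U$. Using this, I would compute both sides of the isometry identity from Definition 4.1: because $p$ is the identity and all membership values survive the image untouched, the intersection $\bigcap_i F(e_i)$ agrees pointwise on $U$ with its image counterpart, and the same holds for $G_A$; hence $\tilde d_1(F_A,G_A)=\tilde d_2(f(F_A),f(G_A))$ and $f$ is a genuine fuzzy soft isometry.

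The step I expect to be the main obstacle is making sure that the image $f(F_A)$ is itself a fuzzy soft number, i.e.\ that it satisfies the convexity, normalization, upper semi-continuity, and compact strict positive level set conditions of Definition 3.12. The restriction to fuzzy soft numbers whose components are constant on each $q$-fibre is designed precisely for this: since the image union then has a single entry, no new membership values can be created that might destroy convexity or move the normalizing point, and all four defining properties transfer from $F_A$ to $f(F_A)$ automatically. Combining this with the failure of $q$ to be one-one delivers Theorem \ref{thm438}.
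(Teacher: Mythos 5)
Your proposal follows the same basic strategy as the paper: both read the theorem as denying the converse of the preceding result, and both refute it with a single counterexample $f=(p,q)$ in which $p$ is a bijection of the universes and $q$ is onto but not one-one, so that $f$ automatically fails to be a one-one onto fuzzy soft number mapping. Where you genuinely diverge is in how the isometry half is certified. The paper reuses Example~\ref{prob422}, introduces one further fuzzy soft number $Q_A$, and checks the single numerical equality $\tilde d_1(H_A,Q_A)=0.1=\tilde d_2(f(H_A),f(Q_A))$ (an equality that is essentially a coincidence, since the two maxima are even attained at different objects); this does not verify the universally quantified condition in the definition of an isometry. You instead aim to make the identity hold for \emph{all} pairs by restricting to fuzzy soft numbers constant on each fibre of $q$, so that the union in the image formula collapses to a single term and $f$ acts as a relabelling. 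That is the right diagnosis of the obstruction: with a non-injective $q$ the image takes a supremum over each fibre $q^{-1}(\beta)$, which can change $\cap_i F(e_i)$ and hence the Hausdorff distance, so on the full class $\overline{(U,E)}$ such an $f$ is in general \emph{not} an isometry. The cost of your fix is that the restricted $f$ is no longer a mapping of the whole class $(U,E)$, so it meets the letter of the definitions only slightly better than the paper's one-pair check; within the paper's framework a cleaner counterexample is to take $q$ injective but \emph{not} surjective, since then every fibre is a singleton, the distance is preserved for all pairs, and $q$ still fails to be one-one onto. Your argument is nonetheless the more careful of the two, because it makes explicit for exactly which pairs the isometry identity can be guaranteed, whereas the paper asserts it after verifying it once.
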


\begin{proof}
Consider the example \ref{prob422} and let
\begin{align*} Q_A=\{Q(e_1)=\{(h_1, 0.3), (h_2, 0.9),(h_3, 1.0), (h_4, 0.7), (h_5, 0.2)\},\\ Q(e_2)=\{(h_1, 0.1), (h_2, 0.7),(h_3, 1.0), (h_4, 0.8), (h_5, 0.0)\},\\ Q(e_3)=\{(h_1, 0.0), (h_2, 0.8),(h_3, 1.0), (h_4, 0.7), (h_5, 0.0)\} \}.\end{align*}
Therefore
\begin{align*}  f(Q_A)=\{e_1'=\{(k_1, 0.1), (k_2, 0.7),(k_3, 1.0), (k_4, 0.8), (k_5, 0.0)\},\\e_2'=\{(k_1, 0.3), (k_2, 0.9),(k_3, 1.0), (k_4, 0.7), (k_5, 0.2)\}\}.\end{align*}
Now
\[ \tilde{d_1}(H_A, Q_A)= 0.1=\tilde{d_2}(f(H_A), f(Q_A)).\]  So $ f $  is fuzzy soft  isometry function but it is not a one one onto fuzzy soft number function.
\end{proof}
\begin{thm}
	Every fuzzy soft  isometry function is not a fuzzy soft  homeomorphism.
\end{thm}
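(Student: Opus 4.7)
The plan is to produce a counterexample in the same spirit as Theorem \ref{thm438}, reusing the mapping $f=(p,q)$ from Example \ref{prob422}. First I would recall that by Theorem \ref{thm438} we already have explicit fuzzy soft numbers $H_A$ and $Q_A$ in $\overline{(U,E)}$ for which the computation $\tilde{d_1}(H_A, Q_A)=\tilde{d_2}(f(H_A), f(Q_A))=0.1$ demonstrates that the particular $f$ of Example \ref{prob422} is a fuzzy soft isometry. So the isometry half of the counterexample is already in hand and needs only to be cited.

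Next I would inspect the component map $q:E\longrightarrow E'$ used in Example \ref{prob422}, namely $q(e_1)=e_2',$ $q(e_2)=e_1',$ $q(e_3)=e_2'.$ Since $q(e_1)=q(e_3)=e_2'$ but $e_1\neq e_3,$ the map $q$ fails to be injective. By the definition of fuzzy soft homeomorphism, a fuzzy soft number mapping $f=(p,q)$ is a homeomorphism only when $q$ is both one-one and onto; as this condition is violated here, $f$ is not a fuzzy soft homeomorphism.

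Combining the two observations yields a single map $f=(p,q)$ which is a fuzzy soft isometry yet not a fuzzy soft homeomorphism, which is exactly what the statement asserts. I would close by remarking that, since Theorem \ref{thm438} already exhibits an isometry which fails to be one-one onto as a fuzzy soft number mapping, and since homeomorphism is an even stronger condition (requiring $q$ to be bijective), the present theorem is essentially a strengthening of Theorem \ref{thm438} and its proof is just a verification on the same example.

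I do not expect any real obstacle: the only substantive point is to confirm that the $q$ used in Example \ref{prob422} is not injective, which is immediate by inspection, and to quote the distance computation already carried out in the proof of Theorem \ref{thm438}. No new calculations with the membership functions are needed beyond what has already appeared in the paper.
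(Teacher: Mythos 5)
Your proposal is essentially the paper's own proof: the paper likewise reduces the statement to Theorem \ref{thm438} (the same $H_A$, $Q_A$ counterexample on the mapping of Example \ref{prob422}) and then invokes the definition of fuzzy soft homeomorphism, which coincides with the "one-one onto" condition on $q$. Your only addition is to spell out explicitly that $q(e_1)=q(e_3)=e_2'$ makes $q$ non-injective, which the paper leaves implicit; otherwise the two arguments are the same.
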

\begin{proof}
From Theorem \ref{thm438}, 	every fuzzy soft  isometry function is not a one one onto fuzzy soft number function. Therefore from definition of homeomorphism,every fuzzy soft  isometry function is not a fuzzy soft  homeomorphism.
\end{proof}
\noindent {\bf Funding:} No funds were received.\\
{\bf Availability of data and materials:} 
No data were used to support this study.\\
{\bf Competing interests:} The authors declare no conflict of interest.\\
{\bf Authors’ Contributions:} 
All authors contributed equally to the manuscript and typed,
read, and approved the final manuscript.


\end{document}